\newcommand{\mathsym}[1]{{}}
\newcommand{\unicode}[1]{{}}
\newtheorem{thm}{Theorem}[section]
\newtheorem{cor}[thm]{Corollary}
\newtheorem{lem}[thm]{Lemma}
\newtheorem{prop}[thm]{Proposition}
\theoremstyle{definition}
\newtheorem{defn}[thm]{Definition}
\newtheorem*{defn*}{Definition}
\newtheorem*{rems*}{Remarks}
\newtheorem*{rem*}{Remark}
\numberwithin{equation}{section}
\begin{document}

\title[Unimodular families of symmetric matrices] {Unimodular families of symmetric matrices }
\author{Wojciech Domitrz}
\address{Warsaw University of Technology\\ Faculty of Mathematics and Information
Science\\
ul. Koszykowa 75\\ 00-662 Warszawa\\ Poland}
\email{domitrz@mini.pw.edu.pl}
\author{Shyuichi Izumiya}
\address{Hokkaido University, Department of Mathematics\\ Faculty of Science\\ Kita 10 Nishi 8, Kita-ku\\
Sapporo\\ Japan}
\email{izumiya@math.sci.hokudai.ac.jp}
\thanks{}
\author{Hiroshi Teramoto}
\address{Hokkaido University\\ Research Institute for Electronic Science\\ Kita 20 Nishi 10, Kita-ku\\ Sapporo\\ Japan}
\address{PRESTO, Department of
Research Promotion\\ 7, Gobancho, Chiyodaku, Tokyo\\ Japan}
\email{teramoto@es.hokudai.ac.jp}
\thanks{The research was supported by JSPS KAKENHI Grant Number JP19K03484 and JST PRESTO Grant Number JPMJPR16E8.}
\keywords{}
\date{}

\maketitle

\begin{abstract}
We introduce the volume-preserving equivalence a\-mong symmetric matrix-valued map-germs which is the unimo\-dular version of Bruce's $\mathcal{G}$-equivalence. The key concept to deduce unimo\-dular classification out of classification relative to
\begin{math}
\mathcal{G}
\end{math}-equivalence is symmetrical quasi-homogeneity, which is a generalization of the condition for a $2 \times 2$ symmetric matrix-valued map-germ in Corollary~2.1 (ii) by Bruce, Goryunov and Zakalyukin \cite{Bruce2002}. If a $\mathcal{G}$-equivalence class contains a symmetrically quasi-homogeneous representative, the class coincides with that relative to the volume-preserving equivalence (up to orientation reversing diffeomorphism in case if the ground field is real). By using that we show that all the simple classes relative to
\begin{math}
\mathcal{G}
\end{math}-equivalence in Bruce's list coincides with those relative to the volume preserving equivalence. Then, we classify map-germs from the plane to the set of $2 \times 2$ and $3 \times 3$ real symmetric matrices of corank at most $1$ and of
\begin{math}
\mathcal{G}_e
\end{math}-codimension less than $9$ and we show some of the normal forms split into two different unimodular singularities. We provide several examples to illustrate that non simplicity does not imply non symmetrical quasi-homogeneity and the condition that a map-germ is symmetrically quasi-homogeneous is stronger than one that each component of the map-germ is quasi-homogeneous. We also present an example of non symmetrically quasi-homogeneous normal form relative to
\begin{math}
\mathcal{G}
\end{math}
and its corresponding formal unimodular normal form.
\end{abstract}

\section{Introduction}
Semi-classical pseudo-differential operators in
\begin{math}
\mathbb{R}^d
\end{math}
whose principal symbols are
\begin{math}
n \times n
\end{math}-matrix valued functions on the phase space
\begin{math}
T^* \mathbb{R}^d
\end{math}
have seen a lot of applications in physics \cite{CV_I}. For example, Landau-Zener model describing non-adiabatic transitions through $n$ electron energy level (avoided) crossings can be described by the semi-classical pseudo-differential operators on
\begin{math}
\mathbb{R}^1
\end{math}
of the following form :
\begin{equation}
\hat{H} = i \hbar \frac{\partial}{\partial t} \otimes I_n - H \left( t \right)
\end{equation}
where
\begin{math}
\hbar
\end{math}
is the Planck constant that is supposed to be small,
\begin{math}
t
\end{math}
is a coordinate on
\begin{math}
\mathbb{R}^1
\end{math},
\begin{math}
I_n
\end{math}
is the
\begin{math}
n \times n
\end{math}
unit matrix, and
\begin{math}
H
\end{math}
is an
\begin{math}
n \times n
\end{math}
smooth matrix valued function of
\begin{math}
t
\end{math}
and non-adiabatic transitions can be understood in terms of the solution
\begin{math}
\psi \left( t \right) \in \mathbb{C}^n
\end{math}
of the following equation :
\begin{math}
\hat{H} \psi \left( t \right) = O \left( \hbar^\infty \right)
\end{math}.
In this case, its principal symbol can be written as
\begin{math}
\tau I_n - H \left( t \right)
\end{math}
by replacing the differential operator
\begin{math}
i \hbar \cfrac{\partial}{\partial t}
\end{math}
by the coordinate
\begin{math}
\tau
\end{math}
in
\begin{math}
T^*_t \mathbb{R}^1
\end{math}. This example is not only of physical interest but there is renewed interest from mathematicians for the cases
\begin{math}
n \ge 3
\end{math}
triggered by the discovery of a virtual turning point in WKB analysis of higher order differential equations \cite{Honda}.

Finding a normal form of the principal symbol of a semi-classical pseudo-differential operator is the first step toward constructing solutions of the corresponding equation. Braam and Duistermaat \cite{Braam} and Colin de Verdi\`ere \cite{CV_I} found normal forms in the case where a principal symbol is a real symmetric matrix under some generic conditions but their bifurcations are yet to be analyzed \cite{CV_I}. An ultimate goal of this paper is to take a step forward their analysis to non-generic cases and find normal forms for such cases.

In this paper, we classify map germs under an equivalence relation similar to that in \cite{Braam,CV_I} but symplectomorphisms are relaxed to volume-preserving diffeomorphisms. The equivalence relation is the unimodular version of Bruce's
\begin{math}
\mathcal{G}
\end{math}-equivalence. Bruce's $\mathcal G$-equivalence  is a special case of $\mathcal{K}$-equivalence
in the sense of Mather \cite{Mather1968,Mather1969}, which is also called $V$-equivalence in \cite{A}. Fortunately, $\mathcal G$ is a geometric subgroup of $\mathcal{K}$ in the sense of Damon \cite{Damon}. Therefore, the method of the singularity theory can work properly. But the group of symplectomorphism-germs at $0$ and the group of volume-preserving diffeomorphism-germs at $0$ are not geometric subgroups in the sense of Damon. Symplectomorphisms are too strict to get finitely-determined map germs away from the generic cases (see \cite{DJZ2} and \cite{D} for symplectic $V$-classification of algebraic varieties). In \cite{Martinet} on page 50 Martinet writes that the group of volume-preserving diffeomorphisms  is big enough that  there is
still some hope of finding a reasonable classification theorem in unimodular geometry. In fact Martinet was right. Results of Lando in \cite{Lando} and Varchenko in \cite{Varchenko} imply that if a hypersurface with isolated singularity in $\mathbb C^m$ is quasi-homogeneuos then its  unimodular $\mathcal{K}$-orbit coincides with its $\mathcal{K}$-orbit.   In \cite{DR} it is proved that  if an algebraic variety-germ is quasi-homogeneous, then its unimodular $\mathcal{K}$-orbit coincides with its (orientation-preserving) $\mathcal{K}$-orbit. It implies that the classifications of simple complete intersection singularities are identical for unimodular $\mathcal{K}$-equivalence and for $\mathcal{K}$-equivalence. Over $\mathbb R$  a $\mathcal{K}$-orbit  corresponds to one or two orbits in the volume preserving (hence orientation-preserving) case, otherwise the results are the same.

In case of
\begin{math}
d = 1 \; \left( r = 2 \right)
\end{math}, volume-preserving diffeomorphisms are symplectomorphisms and our classification provides classification of principal symbols. For example, the principal symbol of the semi-classical pseudo differential operator for the Landau-Zener model describing non-adiabatic transition through avoided crossing between two electron levels
\begin{equation}
\left(
\begin{matrix}
\tau + t & \epsilon \\
\epsilon & \tau - t
\end{matrix}
\right)
\end{equation}
can be understood as a unfolding of the class number $1$ of Table.~\ref{table:2}, where
\begin{math}
\epsilon
\end{math}
is the unfolding parameter. Our classification not only contains such a generic case but also non-generic cases and provides an insight for what types of bifurcations can occur.

The constitution of this paper is as follows. In Sec.~\ref{sec:vpsqh}, we introduce the volume-preserving equivalence among symmetric matrix-valued map-germs which is the unimodular version of Bruce's $\mathcal{G}$-equivalence. The key concept to deduce unimodular classification out of classification relative to
\begin{math}
\mathcal{G}
\end{math}-equivalence is symmetrical quasi-homogeneity, which is introduced in Definition~\ref{def:sqh}. If a $\mathcal{G}$-equivalence class contains a symmetrically quasi-homogeneous representative, the class coincides with that relative to the volume-preserving equivalence (up to orientation reversing diffeomorphism in case if the ground field is real). In Sec.~\ref{sec:sus}, we show that all normal forms in Bruce's list are symmetrically quasi-homogeneous, which indicate all the simple classes relative to
\begin{math}
\mathcal{G}
\end{math}-equi\-valence coincides with those relative to the volume preserving equivalence. In Sec.~\ref{sec:classification}, then, we classify map-germs from the plane to the set of $2 \times 2$ and $3 \times 3$ real symmetric matrices of corank at most $1$ and of
\begin{math}
\mathcal{G}_e
\end{math}-codimension less than $9$, which can be regarded as a real counterpart of Bruce's classification. The results are summarized in Table~\ref{table:2} and Table~\ref{table:n2m3}. In Sec.~\ref{sec:uni_normal_form}, we present the corresponding  unimodular normal forms in Table \ref{table:3} and Table \ref{table:Uni-n2m3}. In Sec.~\ref{sec:example}, we present several examples to illustrate the following: non simplicity does not imply non symmetrical quasi-homogeneity, the condition that a map-germ is symmetrically quasi-homogeneous is stronger than one that each component of the map-germ is quasi-homogeneous. We also present an example of non symmetrically quasi-homogeneous normal form relative to
\begin{math}
\mathcal{G}
\end{math}
and its corresponding formal unimodular normal form. Sec.~\ref{sec:conclusion} is devoted for the conclusion.

\section{Volume preserving equivalence and symmetrical quasi-homogeneity}\label{sec:vpsqh}
Let $M_n(\mathbb K)$ be the space of $n\times n$ matrices with the coefficients in the field $\mathbb K$ of
real or complex numbers.
Let $\textnormal{Sym}(n,\mathbb K)=\textnormal{Sym}_n$ be the subspace of $M_n(\mathbb K)$ of $n\times n$ symmetric matrices.  Let $\textnormal{GL}(n,\mathbb K)=\textnormal{GL}_n$ be the general linear group.
Let $C^{\infty}(\mathbb K^r)$ be the ring of smooth (or $\mathbb K$-analytic)  function-germs at $0$ on $\mathbb K^r$. Let $C^{\infty}(\mathbb K^r, S)$ be the space of  smooth (or $\mathbb K$-analytic)  map-germs $(\mathbb K^r,0)\rightarrow S$, where $S$ is a submanifold of  $M_n(\mathbb K)$. Let $\mathcal X(\mathbb K^r)$ denote the module of smooth (or $\mathbb K$-analytic)  vector field-germs at $0$ on $\mathbb K^r$. We denote by $A^T$ the transpose matrix of a matrix $A$.

\begin{defn}[\cite{B}]
The map-germs $A, B\in C^{\infty}(\mathbb K^r, \textnormal{Sym}_n)$  are $\mathcal{G}$-equivalent if there exists a diffeomorphism-germ $\Phi:   (\mathbb K^r,0)\rightarrow  (\mathbb K^r,0)$  and a  map-germ
$X \in C^{\infty}(\mathbb K^r, \textnormal{GL}_n)$ such that
$$B=X^T (A\circ \Phi) X.$$
\end{defn}
If $\Omega_0, \Omega_1$ are two volume form-germs at  $0$ on $\mathbb K^r$ and map-germs $A, B$ belong to $ C^{\infty}(\mathbb K^r, \textnormal{Sym}_n)$ then we define the following equivalence.

\begin{defn}
The pairs $(A,\Omega_0)$  and $(B,\Omega_1)$  are equivalent if there exists a diffeomorphism-germ $\Phi:   (\mathbb K^r,0)\rightarrow  (\mathbb K^r,0)$  and a  map-germ
$X \in C^{\infty}(\mathbb K^r, \textnormal{GL}_n)$ such that
$$B=X^T (A\circ \Phi) X, \ \ \Omega_1=\Phi^{\ast}\Omega_0.$$
\end{defn}

 Let us fix a volume form-germ   $\Omega$ at  $0$   on $\mathbb K^r$.
\begin{defn}
The map-germs $A, B\in C^{\infty}(\mathbb K^r, \textnormal{Sym}_n)$  are volume-preserving (or unimodular) equivalent if there exists a volume-pre\-serving diffeomorphism-germ $\Phi:   (\mathbb K^r,0)\rightarrow  (\mathbb K^r,0)$ (i. e. $\Phi^{\ast}\Omega=\Omega $) and a  map-germ
$X \in C^{\infty}(\mathbb K^r, \textnormal{GL}_n)$ such that
$$B=X^T (A\circ \Phi) X.$$
\end{defn}

A  diffeomorphism-germ $\Phi:   (\mathbb R^r,0)\rightarrow  (\mathbb R^r,0)$ is orientation-preserving (orientation-reversing resp.) if  $\det d\Phi|_0>0$ ($\det d\Phi|_0<0$ resp.).

If $\mathbb K=\mathbb R$ we also define an orientation-preserving equivalence.

\begin{defn}
The map-germs $A, B\in C^{\infty}(\mathbb R^r, \textnormal{Sym}_n)$  are orien\-tation-preserving  equivalent if there exists an orientation-preserving  diffeomorphism-germ $\Phi:   (\mathbb R^r,0)\rightarrow  (\mathbb R^r,0)$ and a  map-germ
$X \in C^{\infty}(\mathbb R^r, \textnormal{GL}_n)$ such that
$$B=X^T (A\circ \Phi) X.$$
\end{defn}

Let $\Lambda^k$ denote the space (of germs) of smooth (or $\mathbb K$-analytic)
differential $k$-forms on $(\mathbb K ^r,0)$, and denote
the subset of $\Lambda ^r$ of (germs of)  volume forms
by $\mbox{Vol}$. Let $\mathcal D$ denote the group of diffeomorphism-germs $ (\mathbb K^r,0)\rightarrow  (\mathbb K^r,0)$. For a given subgroup $H$ of the group of diffeomorphism-germs of $\mathcal D$ we consider
a $C^{\infty}(\mathbb K^r)$-module $M$ in the Lie algebra $LH$ of $H$
(and $M=LH$ if $LH$ itself is a $C^{\infty}(\mathbb K^r)$-module). In the following
$\Omega$ and $\Omega _i$ always denote volume form-germs
in $(\mathbb K ^r,0)$.

\begin{defn}\label{H-diff}
We say that $\Omega _0$ and $\Omega _1$ are $H$-diffeomorphic if
there is a diffeomorphism-germ $\Phi\in H$ such that
$\Phi^{\ast}\Omega_1=\Omega_0$
\end{defn}

\begin{defn}\label{IH-diff}
We say that $\Omega _0$ and $\Omega _1$ are $H$-isotopic if there
is a smooth (or $\mathbb K$-analytic)  family of diffeomorphism-germs $\Phi_t\in H$ for $t\in
[0,1]$ such that $\Phi_1^{\ast}\Omega_1=\Omega_0$ and
$\Phi_0=\mbox {Id}$.
\end{defn}

\begin{defn}\label{M-eq}
We say that $\Omega _0$ and $\Omega _1$ are $M$-equivalent if
there is a vector field-germ $V\in M$ such that $\Omega _0-\Omega
_1=d(V\rfloor \Omega)$ (for any volume form-germ $\Omega$).
\end{defn}

\begin{thm}[Theorem 2.6 in \cite{DR}] \label{thm2.8}
\label{H-Moser} If $\Omega _0$ and $\Omega _1$ are $M$-equivalent
volume form-germs, which for $\mathbb K=\mathbb R$ define the same
orientation, then $\Omega _0$ and $\Omega _1$ are $H$-isotopic.
\end{thm}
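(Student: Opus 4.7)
The plan is the classical Moser homotopy trick, adapted so that the generating vector field lies in the prescribed module $M$ and hence its flow stays inside the subgroup $H$.

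First I would set up the linear path $\Omega_t=(1-t)\Omega_0+t\Omega_1=\Omega_0+t\,d(V\rfloor\Omega)$ for $t\in[0,1]$, using the hypothesis $\Omega_0-\Omega_1=d(V\rfloor\Omega)$ with $V\in M$. I need $\Omega_t$ to be a volume form-germ for every $t\in[0,1]$. At the origin, $\Omega_0|_0$ and $\Omega_1|_0$ are nonzero elements of the one-dimensional space $\Lambda^r_0$; the orientation hypothesis over $\mathbb R$ (and non-vanishing over $\mathbb C$, after shrinking or multiplying $\Omega_1$ by a unit so that the straight-line segment avoids $0$) guarantees $\Omega_t|_0\neq 0$. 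Then by continuity $\Omega_t$ stays non-vanishing on a small common neighbourhood of $0$, which is all we need at the germ level.

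Next I would look for a time-dependent vector field-germ $X_t$ whose flow $\Phi_t$ satisfies $\Phi_t^{\ast}\Omega_t=\Omega_0$ with $\Phi_0=\mbox{Id}$. Differentiating this identity and using $d\Omega_t=0$ (top-degree forms are closed), the requirement reduces to
\begin{equation*}
d(X_t\rfloor \Omega_t)=-\tfrac{d}{dt}\Omega_t=-d(V\rfloor \Omega).
\end{equation*}
Rather than solving this mod exact forms, I would impose the stronger pointwise equation
\begin{equation*}
X_t\rfloor \Omega_t=-V\rfloor \Omega.
\end{equation*}
Since $\Omega_t$ is a volume form-germ, contraction with $\Omega_t$ is a $C^{\infty}$-linear isomorphism from $\mathcal X(\mathbb K^r)$ onto $\Lambda^{r-1}$. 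Writing $\Omega_t=g_t\,\Omega$ with $g_t\in C^{\infty}(\mathbb K^r)$ non-vanishing near $0$ and depending smoothly on $t$, the equation forces $X_t=-g_t^{-1}V$. This is the crucial step where the assumption that $M$ is a $C^{\infty}(\mathbb K^r)$-module is used: because $V\in M$ and $g_t^{-1}\in C^{\infty}(\mathbb K^r)$, we conclude $X_t\in M\subset LH$.

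Finally I would integrate the time-dependent vector field $X_t$ on a (possibly smaller) neighbourhood of $0$ to produce a smooth family of diffeomorphism-germs $\Phi_t$ with $\Phi_0=\mbox{Id}$. Since $X_t\in LH$ for every $t$ and $H$ is a group of diffeomorphism-germs with Lie algebra $LH$, the flow stays inside $H$. By construction $\frac{d}{dt}(\Phi_t^{\ast}\Omega_t)=0$ and $\Phi_0^{\ast}\Omega_0=\Omega_0$, so $\Phi_t^{\ast}\Omega_t=\Omega_0$ for all $t$; at $t=1$ this yields $\Phi_1^{\ast}\Omega_1=\Omega_0$, exactly the definition of $H$-isotopy. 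The main obstacle is the verification that $\Omega_t$ remains a volume form throughout the interval (where the orientation hypothesis enters) together with the bookkeeping that the solution $X_t$ genuinely stays inside $M$; both the solvability of the cohomological equation and the $C^{\infty}$-module structure of $M$ are designed to make these steps routine once the Moser framework is in place.
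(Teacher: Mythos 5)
The paper does not actually prove this statement: it is imported verbatim as Theorem~2.6 of the cited reference of Domitrz--Rieger, so the only available comparison is with the standard proof there, and your Moser-homotopy argument is exactly that proof. The structure is right and the key point is correctly identified: solving $X_t\rfloor\Omega_t=\pm V\rfloor\Omega$ \emph{pointwise} forces $X_t=\pm g_t^{-1}V$, and the $C^{\infty}(\mathbb K^r)$-module hypothesis on $M$ is precisely what keeps $X_t$ in $M\subset LH$ so that the flow stays in $H$ (note also that $V(0)=0$ since $LH$ consists of germs of fields generating origin-preserving diffeomorphisms, so the flow is defined up to $t=1$ near $0$). Two blemishes. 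First, a sign slip: from $\Omega_0-\Omega_1=d(V\rfloor\Omega)$ one gets $\Omega_t=\Omega_0-t\,d(V\rfloor\Omega)$, not $+$; this propagates harmlessly. Second, and more substantively, your treatment of the case $\mathbb K=\mathbb C$ is not right as stated: if $\Omega_1|_0$ is a negative real multiple of $\Omega_0|_0$, the linear segment degenerates at an interior $t$, and neither shrinking the neighbourhood (which does nothing at the origin) nor multiplying $\Omega_1$ by a unit repairs this --- the latter replaces $\Omega_1$ by $u\Omega_1$, and there is no reason $(1-u)\Omega_1$ should again be of the form $d(W\rfloor\Omega)$ with $W\in M$, so you cannot then get back from $u\Omega_1$ to $\Omega_1$. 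The correct fix is to replace the real parameter by a path $c:[0,1]\to\mathbb C$ from $0$ to $1$ avoiding the single value $s_*$ at which $\Omega_0|_0+s_*(\Omega_1|_0-\Omega_0|_0)=0$, setting $\Omega_t=\Omega_0-c(t)\,d(V\rfloor\Omega)$; then $\dot\Omega_t=d\bigl((-c'(t)V)\rfloor\Omega\bigr)$ with $-c'(t)V\in M$, and the rest of your argument goes through unchanged. Over $\mathbb R$ the orientation hypothesis makes the straight segment work, which is the case the paper actually uses.
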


We denote by $\mathcal D_A$ the following set
$$
\{ \Phi\in \mathcal D| \exists  X \in C^{\infty}(\mathbb K^r, \textnormal{GL}_n)  \ \ A=X^T (A\circ \Phi) X\}.
$$
 It is easy to see that $\mathcal D_A$ is a subgroup of $\mathcal D$. It is the stabilizer (the isotropy group) of $A$.
The Lie algebra $L\mathcal D_A$ of the isotropy group $\mathcal D_A$   has the following form
$$
L\mathcal D_A=\{ V\in \mathcal X(\mathbb K^r)| \exists U\in C^{\infty}(\mathbb K^r,M_{n}(\mathbb K)) \  dA(V)=U^T A+AU\}.
$$

We prove the following properties of $L\mathcal D_A$.

\begin{lem}\label{Lem}
Let $A$ belong to $C^{\infty}(\mathbb K^r, \textnormal{Sym}_n)$.
Then $L\mathcal D_A$ is a $C^{\infty}(\mathbb K^r)$-module.
If a vector field-germ $V$ belongs to $L\mathcal D_A$ then $V$ belongs to  $L\mathcal D_{X^TAX}$ for any map-germ $X\in C^{\infty}(\mathbb K^r, \textnormal{GL}_n)$.
\end{lem}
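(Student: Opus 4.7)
The plan is to handle the two statements separately, since the first is a direct verification from the defining formula and the second is a product-rule computation.

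For the module structure, I would unpack the definition: $V\in L\mathcal D_A$ iff there exists $U\in C^\infty(\mathbb K^r,M_n(\mathbb K))$ with $dA(V)=U^TA+AU$. The key observation is that the map $V\mapsto dA(V)=\sum_i V_i\,\partial A/\partial x_i$ is $C^\infty(\mathbb K^r)$-linear in $V$, and the assignment $U\mapsto U^TA+AU$ is itself $C^\infty(\mathbb K^r)$-linear in $U$ (matrix entries commute with scalars). So given $V_1,V_2\in L\mathcal D_A$ with witnesses $U_1,U_2$, the sum $V_1+V_2$ is witnessed by $U_1+U_2$, and for $f\in C^\infty(\mathbb K^r)$ the field $fV$ is witnessed by $fU$. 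That finishes the first claim.

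For the second claim, set $B=X^TAX$ and compute $dB(V)$ by the product rule, exploiting $(dX(V))^T=dX^T(V)$:
\begin{equation*}
dB(V)=dX^T(V)\,A\,X+X^T\,dA(V)\,X+X^T\,A\,dX(V).
\end{equation*}
Substituting the assumption $dA(V)=U^TA+AU$ gives
\begin{equation*}
dB(V)=\bigl(dX^T(V)+X^TU^T\bigr)A\,X+X^T A\bigl(UX+dX(V)\bigr).
\end{equation*}
This suggests the candidate $U':=X^{-1}\bigl(UX+dX(V)\bigr)\in C^\infty(\mathbb K^r,M_n(\mathbb K))$, which is well defined because $X$ takes values in $\textnormal{GL}_n$. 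One checks immediately that $XU'=UX+dX(V)$ and therefore $(U')^TX^T=(UX+dX(V))^T=X^TU^T+dX^T(V)$, so the displayed expression for $dB(V)$ is exactly $(U')^T B+B\,U'$. Hence $V\in L\mathcal D_B$.

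There is no real obstacle here: the only nontrivial ingredient is the invertibility of $X$ needed to solve for $U'$, which is built into $X\in C^\infty(\mathbb K^r,\textnormal{GL}_n)$. I would present the argument in exactly the two-step order above, keeping the $C^\infty$-bilinearity remarks explicit so that the module statement is transparent, and then writing out the product-rule computation to motivate the choice of $U'$.
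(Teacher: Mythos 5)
Your proposal is correct and follows essentially the same route as the paper: the module property is verified directly from the defining identity with witness $U_1+fU_2$, and the second claim is obtained by the same product-rule computation, with your $U'=X^{-1}\bigl(UX+dX(V)\bigr)$ coinciding with the paper's matrix $C=X^{-1}dX(V)+X^{-1}UX$. No gaps.
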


\begin{proof}
Let $V_1, V_2 \in L\mathcal D_A$ and $f\in C^{\infty}(\mathbb K^r)$. Then there exists a map-germ $U_i\in C^{\infty}(\mathbb K^r,M_{n}(\mathbb K))$ such that $dA(V_i)=U_i^T A+AU_i$ for $i=1,2$. Thus
$dA(V_1+fV_2)=dA(V_1)+fdA(V_2)=U_1^T A+AU_1+f(U_2^T A+AU_2)=(U_1+fU_2)^TA+A(U_1+fU_2)$. It implies that $V_1+fV_2\in  L\mathcal D_A$. Hence $L\mathcal D_A$ is a $C^{\infty}(\mathbb K^r)$-module.

Since $V\in L\mathcal D_A$ there exists a map-germ $U\in C^{\infty}(\mathbb K^r,M_{n}(\mathbb K))$ such that $dA(V)=U^T A+AU$.
Then
\begin{multline}
d(X^T A X)(V)=(dX(V))^T A X+X^T dA(V) X+X^T A dX(V) \\
= (dX(V))^T  ((X^{-1})^T X^T) A X+X^T (U^T A+A U) X+X^T A ( X X^{-1}) dX(V) \\
= ( X^{-1} dX(V) +X^{-1}UX  )^T(X^T A X)+(X^T A  X) (X^{-1} dX(V)+X^{-1}U X).
\end{multline}

It implies that for $C=X^{-1} dX(V) +X^{-1}UX$ we have
$$ d(X^T A X)(V)=C^T(X^T A X)+(X^T A X) C. $$ Thus $V$ belongs to $L\mathcal D_{X^TAX}$.
\end{proof}

\begin{defn} \label{def:sqh} A smooth (or $\mathbb K$-analytic)  map-germ $A$,  which belongs to  $ C^{\infty}(\mathbb K^r, \textnormal{Sym}_n)$, is symmetrically quasi-homogeneous if there exists a coordinate system $(x_1,\cdots,x_r)$ on $(\mathbb K^r,0)$, non-negative integers $\lambda_k$ for $k=1,\cdots,r$ and $\delta_i$ for $i=1,\cdots,n$, a map-germ $X \in C^{\infty}(\mathbb K^r, \textnormal{GL}_n)$ such that $\sum_{k=1}^r \lambda_k>0$  and
 $$ B_{ij}(t^{\lambda_1}x_1,\cdots, t^{\lambda_r}x_r)=t^{\frac{1}{2}(\delta_i+\delta_j)}B_{ij}(x_1,\cdots,x_r)$$
 for any $t\in \mathbb K$ and $i,j=1,\cdots, n$, where $B=X^T A X$.

 The integers $\lambda_1,\cdots, \lambda_r$ are called weights and $\delta_1,\cdots, \delta_n$ are called quasi-degrees.
\end{defn}

\begin{defn}\label{gen-Eul}
A linear vector field
$$
E_{\lambda}=\sum_{i=1}^r \lambda_i x_i\frac{\partial}{\partial x_i}.
$$
with integer coefficients $\lambda_i$ is called a generalized
Euler vector field (for coordinates
$(x_1,\ldots,x_r)\in\mathbb K ^r$ and weights $(\lambda_1,\ldots,\lambda_r)$). The number $\sum_{i=1}^r \lambda_i$ is  called a total weight.
\end{defn}

If  a map-germ $A \in C^{\infty}(\mathbb K^r, \textnormal{Sym}_n)$ is symmetrically quasi-homo\-geneous with weights  $(\lambda_1,\ldots,\lambda_r)$ and quasi-degrees $(\delta_1,\cdots, \delta_n)$ in the coordinate system $(x_1,\cdots,x_r)$ on $(\mathbb K^r,0)$
 then for any $i,j=1,\cdots,n$
$$
dA_{ij}(E_{\lambda})=\frac{\delta_i+\delta_j}{2}A_{ij}.
$$
Thus $dA(E_{\lambda})=\left(\frac{1}{2}\text{diag}(\delta_1,\cdots,\delta_n)\right)^TA+A \left(\frac{1}{2}\text{diag}(\delta_1,\cdots,\delta_n)\right).$

Thus by Lemma \ref{Lem} we obtain the following proposition (see also \cite{DJZ1}).
\begin{prop}
If  a map-germ $A \in C^{\infty}(\mathbb K^r, \textnormal{Sym}_n)$ is symmetrically quasi-homogeneous then the Euler vector field $E_{\lambda}$ belongs to $L\mathcal D_A$.
\end{prop}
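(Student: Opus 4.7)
The plan is to unpack symmetric quasi-homogeneity into an infinitesimal identity for the congruent matrix $B = X^T A X$, recognise that identity as the membership $E_{\lambda}\in L\mathcal D_B$, and then transport the conclusion back to $A$ using Lemma~\ref{Lem}.

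First I would differentiate the defining functional equation
$$B_{ij}(t^{\lambda_1} x_1,\ldots,t^{\lambda_r} x_r) = t^{(\delta_i+\delta_j)/2}\,B_{ij}(x_1,\ldots,x_r)$$
in $t$ and evaluate at $t=1$. The left-hand side produces $\sum_{k=1}^r \lambda_k x_k \partial_{x_k} B_{ij} = dB_{ij}(E_{\lambda})$, and the right-hand side produces $\tfrac{\delta_i+\delta_j}{2}\,B_{ij}$. Assembling these $n^2$ scalar identities into a single matrix equation yields
$$dB(E_{\lambda}) = D^T B + B D,\qquad D := \tfrac{1}{2}\,\text{diag}(\delta_1,\ldots,\delta_n),$$
with $D$ a constant (hence smooth) element of $C^{\infty}(\mathbb K^r, M_n(\mathbb K))$. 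Comparing against the explicit description of $L\mathcal D_{\bullet}$ recorded immediately before Lemma~\ref{Lem} (taking $U = D$), this is precisely the statement $E_{\lambda}\in L\mathcal D_B$.

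To descend from $B$ to $A$, set $Y := X^{-1}\in C^{\infty}(\mathbb K^r, \textnormal{GL}_n)$, so that $A = Y^T B Y$. Applying Lemma~\ref{Lem} with $B$ in the role of $A$ and $Y$ in the role of $X$, one obtains $E_{\lambda}\in L\mathcal D_{Y^T B Y} = L\mathcal D_A$, which is the desired conclusion.

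No step presents a real obstacle: the first is the standard Euler-type differentiation attached to any quasi-homogeneity relation, and the second is a one-line invocation of the preceding lemma. The only point worth flagging is that symmetric quasi-homogeneity is imposed on the congruent matrix $B$ rather than on $A$ itself, so the congruence transfer supplied by Lemma~\ref{Lem} cannot be skipped, since the conjugator $X(x)$ is not assumed to carry any quasi-homogeneity of its own.
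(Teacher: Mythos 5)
Your proposal is correct and follows essentially the same route as the paper: differentiate the quasi-homogeneity relation in $t$ at $t=1$ to obtain $dB(E_{\lambda}) = D^T B + B D$ with $D = \tfrac{1}{2}\mathrm{diag}(\delta_1,\ldots,\delta_n)$, hence $E_{\lambda}\in L\mathcal D_B$, and then invoke Lemma~\ref{Lem} to pass to $A = (X^{-1})^T B X^{-1}$. If anything, you are slightly more careful than the paper's write-up, which states the Euler identity for $A$ rather than for the congruent matrix $B$ before citing the same lemma.
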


The following proposition is crucial for our considerations.
\begin{prop}[Proposition 2.13 in \cite{DR}]\label{prEuler}
Let $V$ be a smooth (or $\mathbb K$-analytic)  vector field-germ
on $(\mathbb K^r,0)$ which is locally diffeomorphic to a
generalized Euler vector field-germ with non-negative weights
and   positive total weight.
If $V$ generates a $C^{\infty}(\mathbb K^r)$-module in $LH$ then any two
volume form-germs (which over $\mathbb K =\mathbb R$ define the same orientation)
are $H$-isotopic.
\end{prop}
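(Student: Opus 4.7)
The plan is to invoke Theorem~\ref{thm2.8}: since $M := C^{\infty}(\mathbb K^r)\cdot V$ is a $C^{\infty}(\mathbb K^r)$-module contained in $LH$ by hypothesis, it suffices to show that $\Omega_0$ and $\Omega_1$ are $M$-equivalent, i.e.\ that there exists $W\in M$ with $\Omega_0-\Omega_1 = d(W\rfloor \Omega)$ for a suitably chosen volume form $\Omega$. Because $V$ is locally diffeomorphic to a generalized Euler vector field, I first change coordinates so that $V = E_\lambda = \sum_{i=1}^r \lambda_i x_i\partial_{x_i}$ with non-negative integer weights and total weight $|\lambda| := \sum_i \lambda_i\geq 1$, and I take $\Omega = \Omega_{\mathrm{std}} := dx_1\wedge\cdots\wedge dx_r$ for all calculations.

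The key identity is
$$d\bigl((fE_\lambda)\rfloor\Omega_{\mathrm{std}}\bigr) = \mathcal L_{fE_\lambda}\Omega_{\mathrm{std}} = \bigl(|\lambda|\, f + E_\lambda(f)\bigr)\,\Omega_{\mathrm{std}}.$$
Decomposing $\Omega_0-\Omega_1 = G\,\Omega_{\mathrm{std}}$, the $M$-equivalence therefore reduces to solving the first-order linear PDE $E_\lambda(f) + |\lambda|\, f = G$, after which $W := fV\in M$ does the job. My candidate, obtained by averaging $G$ along the quasi-homogeneous dilation $\psi_t(x)=(t^{\lambda_1}x_1,\ldots,t^{\lambda_r}x_r)$, is
$$f(x) := \int_0^1 t^{|\lambda|-1}\,G\bigl(\psi_t(x)\bigr)\,dt.$$
Verifying the PDE is a one-line integration by parts from the identity $E_\lambda(G)\circ\psi_t = t\,\tfrac{d}{dt}(G\circ\psi_t)$, with the boundary contribution at $t=0$ vanishing precisely because $|\lambda|\geq 1$.

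The main obstacle I anticipate is showing that $f$ is a genuine germ at $0$ in $C^{\infty}(\mathbb K^r)$ (respectively $\mathbb K$-analytic). Here the two hypotheses are sharp: non-negativity of every $\lambda_i$ keeps $\psi_t(x)$ inside the domain of $G$ for $t\in[0,1]$, while positivity of the total weight makes the factor $t^{|\lambda|-1}$ integrable. Iterated $x$-derivatives of the integrand only pick up further non-negative powers of $t$, so differentiation under the integral sign is justified to all orders and $f$ inherits the regularity of $G$. Once $M$-equivalence is established, Theorem~\ref{thm2.8} yields the desired $H$-isotopy, with the same-orientation caveat for $\mathbb K=\mathbb R$ inherited verbatim from that theorem.
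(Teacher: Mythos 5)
The paper does not prove this proposition itself --- it is quoted verbatim from \cite{DR} (Proposition 2.13 there), so there is no internal proof to compare against. Your argument is correct and is essentially the standard one used in that reference: reduce to $M$-equivalence for $M=C^{\infty}(\mathbb K^r)\cdot V$, pass to coordinates where $V=E_\lambda$, and solve $E_\lambda(f)+|\lambda|f=G$ by the quasi-homogeneous homotopy integral $f(x)=\int_0^1 t^{|\lambda|-1}G(\psi_t(x))\,dt$, where non-negativity of the weights and positivity of $|\lambda|$ are exactly what make the integrand smooth (or analytic) and the boundary term at $t=0$ vanish; Theorem~\ref{thm2.8} then finishes the job.
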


By Proposition \ref{prEuler} we prove the following result. 

\begin{thm}\label{qh-main}
If a smooth (or $\mathbb K$-analytic)  map-germ $A$, which belongs to  $C^{\infty}(\mathbb K^r, \textnormal{Sym}_n)$, is symmetrically quasi-homogeneous then any two
germs of volume forms (which over $\mathbb K =\mathbb R$ define the same orientation)
are $\mathcal D_A$-isotopic.

\end{thm}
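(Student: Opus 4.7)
The plan is to read Theorem \ref{qh-main} as an immediate corollary of Proposition \ref{prEuler} applied to the subgroup $H=\mathcal D_A$ and the vector field $V=E_\lambda$, where $E_\lambda$ is the generalized Euler vector field associated to the weights $\lambda_1,\ldots,\lambda_r$ furnished by the definition of symmetric quasi-homogeneity. All the necessary ingredients have been assembled in the preceding statements of the section, so the proof should be a short verification that the hypotheses of Proposition \ref{prEuler} are met.

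More concretely, I would proceed in three steps. First, invoke the proposition just before the theorem to conclude that $E_\lambda\in L\mathcal D_A$, using the fact that $A$ is symmetrically quasi-homogeneous. Second, invoke Lemma \ref{Lem} to conclude that $L\mathcal D_A$ is closed under multiplication by elements of $C^{\infty}(\mathbb K^r)$; combined with the previous step, this shows that the $C^{\infty}(\mathbb K^r)$-module generated by $E_\lambda$ is contained in $L\mathcal D_A$, which is exactly the condition ``$V$ generates a $C^{\infty}(\mathbb K^r)$-module in $LH$'' in Proposition \ref{prEuler}. Third, note that by Definition \ref{def:sqh} the integers $\lambda_k$ are non-negative and their sum $\sum_{k=1}^r\lambda_k$ is strictly positive, so $E_\lambda$ is itself (not merely locally diffeomorphic to) a generalized Euler vector field with non-negative weights and positive total weight. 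These three observations together match the hypotheses of Proposition \ref{prEuler}, whose conclusion is precisely that any two volume form-germs (defining the same orientation in the real case) are $\mathcal D_A$-isotopic.

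The main obstacle I anticipate is essentially bookkeeping rather than a genuine analytical difficulty: one must confirm that $\mathcal D_A$ qualifies as a subgroup $H$ of $\mathcal D$ to which Proposition \ref{prEuler} applies, and that the standing Euler-vector-field hypotheses translate from the definition unchanged. Since $\mathcal D_A$ was already verified to be a subgroup of $\mathcal D$ right after its introduction, and the weight data come directly from Definition \ref{def:sqh}, no Moser-type deformation argument needs to be carried out inside the proof itself; all such work has been outsourced to Theorem \ref{thm2.8} and Proposition \ref{prEuler}. Consequently I would expect the written proof to amount to little more than citing the preceding proposition, Lemma \ref{Lem}, and Proposition \ref{prEuler} in sequence.
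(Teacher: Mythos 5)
Your proposal is correct and follows essentially the same route as the paper: the published proof consists precisely of citing Lemma \ref{Lem} for the $C^{\infty}(\mathbb K^r)$-module property, the preceding proposition for $E_{\lambda}\in L\mathcal D_A$, and Proposition \ref{prEuler} with $H=\mathcal D_A$. The only cosmetic difference is that the paper does not dwell on the point that $E_{\lambda}$ need only be locally diffeomorphic to an Euler field (which absorbs the coordinate change and the conjugation by $X$ in Definition \ref{def:sqh}), a detail your third step also handles correctly.
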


\begin{proof}
By Lemma \ref{Lem} $L\mathcal D_A$ is a $C^{\infty}(\mathbb K^r)$-module. The  Euler vector field $E_{\lambda}$ belongs to $L\mathcal D_A$ and it generates $C^{\infty}(\mathbb K^r)$-module.
By Proposition \ref{prEuler} any two
germs of volume forms (which over $\mathbb K =\mathbb R$ define the same orientation)
are $\mathcal D_A$-isotopic.
\end{proof}

Let us fix a volume form-germ $\Omega$ at $0$ in $\mathbb K^r$.
\begin{cor}\label{C-eq}
Let  $A \in C^{\infty}(\mathbb C^r, \textnormal{Sym}_n)$ be    symmetrically quasi-homo\-geneous.
If $B  \in C^{\infty}(\mathbb C^r, \textnormal{Sym}_n)$ is $\mathcal G$-equivalent to $A$ then $B$ is volume-preserving  equivalent to $A$.
\end{cor}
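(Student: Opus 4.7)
The plan is to upgrade the diffeomorphism $\Phi$ witnessing $\mathcal G$-equivalence to a volume-preserving one, paying the price by a further conjugation coming from a stabilizer element of $A$ supplied by Theorem \ref{qh-main}.

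Suppose $B = X^T (A\circ\Phi) X$ for some $\Phi\in\mathcal D$ and $X\in C^\infty(\mathbb C^r,\textnormal{GL}_n)$. The volume form $(\Phi^{-1})^{\ast}\Omega$ is again a germ of a volume form at $0$. Because $A$ is symmetrically quasi-homogeneous, Theorem \ref{qh-main} applies and (working over $\mathbb C$, so that orientation is not an issue) yields a diffeomorphism-germ $\Psi_0\in\mathcal D_A$ with
\[
\Psi_0^{\ast}\Omega = (\Phi^{-1})^{\ast}\Omega.
\]
Define $\Psi := \Psi_0\circ\Phi$. Then $\Psi^{\ast}\Omega = \Phi^{\ast}\Psi_0^{\ast}\Omega = \Phi^{\ast}(\Phi^{-1})^{\ast}\Omega = \Omega$, so $\Psi$ is volume-preserving.

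Now I use the defining property of the stabilizer. Since $\Psi_0\in\mathcal D_A$, there exists $Z\in C^\infty(\mathbb C^r,\textnormal{GL}_n)$ with $A = Z^T(A\circ\Psi_0)Z$, equivalently $A\circ\Psi_0 = (Z^{-1})^T A\,Z^{-1}$. Composing this identity with $\Phi$ on the right gives
\[
A\circ\Psi = A\circ\Psi_0\circ\Phi = (Z^{-1}\circ\Phi)^T\,(A\circ\Phi)\,(Z^{-1}\circ\Phi),
\]
so $A\circ\Phi = (Z\circ\Phi)^T (A\circ\Psi)(Z\circ\Phi)$. Substituting this into the initial relation,
\[
B = X^T(A\circ\Phi)X = \bigl((Z\circ\Phi)X\bigr)^T (A\circ\Psi)\bigl((Z\circ\Phi)X\bigr),
\]
which exhibits $B$ as volume-preserving equivalent to $A$ via the pair $(\Psi,(Z\circ\Phi)X)$.

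The only non-routine point is producing $\Psi_0$: this is exactly where symmetrical quasi-homogeneity enters, via Theorem \ref{qh-main}, which in turn rests on the existence of the generalized Euler vector field inside $L\mathcal D_A$ and on Proposition \ref{prEuler}. Everything else is a matter of manipulating the $\mathcal G$-relation; the $\mathbb C$-hypothesis is used only to avoid the orientation dichotomy that would force the extra ``up to an orientation-reversing diffeomorphism'' caveat in the real case.
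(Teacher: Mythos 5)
Your proof is correct and follows essentially the same route the paper takes: the corollary is stated without a separate proof, but the identical argument (use Theorem \ref{qh-main} to produce a stabilizer element of $\mathcal D_A$ correcting the volume form, then compose it with the given diffeomorphism) appears explicitly in the paper's proof of Proposition \ref{split}; your version merely spells out the matrix bookkeeping that the paper phrases as equivalence of pairs $(A,\Psi^{\ast}\Omega)\sim(A,\Omega)$.
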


\begin{prop}\label{split}
Let  $A  \in C^{\infty}(\mathbb R^r, \textnormal{Sym}_n)$  be    symmetrically quasi-homogeneous.

If  there exits a diffeomorphism-germ $\Phi \in \mathcal D_A$ reversing the orientation of $\mathbb R^r$ then any $B  \in C^{\infty}(\mathbb R^r, \textnormal{Sym}_n)$,  which is  $\mathcal G$-equivalent to $A$,  is volume-preserving  equivalent to $A$.

Let us assume that   every diffeomorphism-germ $\Phi \in \mathcal D_A$ preserves the orientation of $\mathbb R^r$.

If   $B  \in C^{\infty}(\mathbb R^r, \textnormal{Sym}_n)$ is  orientation-preserving equivalent to $A$  then $B$ is volume-preserving  equivalent to $A$.

If $B  \in C^{\infty}(\mathbb R^r, \textnormal{Sym}_n)$ is equivalent to $A$ but $B$ is not orientation-preserving equivalent to $A$ then $B$ is volume-preserving  equivalent to $\bar{A}$,
where $$\bar{A}(x_1,x_2,\cdots,x_r)=A(-x_1,x_2,\cdots,x_r),$$
and $A, \bar{A}$ are not volume-preserving equivalent.
\end{prop}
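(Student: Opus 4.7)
The plan is to reduce all three assertions to Theorem~\ref{qh-main} by a uniform ansatz: given any equivalence $B = X^T(A\circ \Phi_1)X$, try to upgrade it to volume-preserving form by setting $\tilde\Phi := \Psi\circ \Phi_1$ for some $\Psi \in \mathcal D_A$. Since $\Psi \in \mathcal D_A$ supplies $Y \in C^\infty(\mathbb R^r,\textnormal{GL}_n)$ with $A = Y^T(A\circ \Psi)Y$, a short algebraic manipulation yields $B = \tilde X^T(A\circ \tilde\Phi)\tilde X$ with $\tilde X := (Y\circ \Phi_1)X$, and the only remaining condition to enforce is $\tilde\Phi^*\Omega = \Omega$, equivalently $\Psi^*\Omega = (\Phi_1^{-1})^*\Omega$. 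The whole problem is then to find $\Psi \in \mathcal D_A$ carrying one prescribed volume form to another, which is what Theorem~\ref{qh-main} delivers whenever the two forms share orientation.

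For the first assertion, pick an orientation-reversing $\Psi_0\in \mathcal D_A$. If $\Phi_1$ preserves orientation, then $(\Phi_1^{-1})^*\Omega$ and $\Omega$ share orientation, and Theorem~\ref{qh-main} produces $\Psi$ directly. If $\Phi_1$ reverses orientation, then $(\Phi_1^{-1})^*\Omega$ and $\Psi_0^*\Omega$ share orientation, so Theorem~\ref{qh-main} gives $\Psi_1\in \mathcal D_A$ with $(\Psi_0\circ \Psi_1)^*\Omega = (\Phi_1^{-1})^*\Omega$, and $\Psi := \Psi_0\circ \Psi_1$ works. The second assertion is just the orientation-preserving sub-case of this same argument under the hypothesis that no orientation-reversing element sits in $\mathcal D_A$.

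For the third assertion, let $\sigma(x_1,\ldots,x_r) := (-x_1,x_2,\ldots,x_r)$, so that $\bar A = A\circ \sigma$. Then $\bar A$ is again symmetrically quasi-homogeneous (same weights, coordinates precomposed by $\sigma$), and $\mathcal D_{\bar A}$ also contains no orientation-reversing diffeomorphism, because $\Phi \in \mathcal D_{\bar A}$ is equivalent to $\sigma\Phi\sigma \in \mathcal D_A$ and conjugation by $\sigma$ preserves orientation. If $B$ is $\mathcal G$-equivalent to $A$ but not orientation-preserving equivalent, then every witnessing $\Phi_1$ must reverse orientation; rewriting $A\circ \Phi_1 = \bar A\circ (\sigma\circ \Phi_1)$ with $\sigma\circ \Phi_1$ orientation-preserving exhibits $B$ as orientation-preserving equivalent to $\bar A$, so the already proved second assertion applied to $\bar A$ yields volume-preserving equivalence of $B$ and $\bar A$. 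Non-equivalence of $A$ and $\bar A$ is immediate: a hypothetical volume-preserving equivalence $A = X^T(\bar A\circ \Phi)X$ would force $\Phi$ to be orientation-preserving (since $\Phi^*\Omega = \Omega$ forces $\det d\Phi|_0 = 1$), and substituting $\bar A = A\circ \sigma$ would place the orientation-reversing $\sigma\circ \Phi$ in $\mathcal D_A$, contradicting the standing hypothesis.

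The only real difficulty is the orientation bookkeeping through the pullback $\Psi^*\Omega = (\Phi_1^{-1})^*\Omega$; Theorem~\ref{qh-main} does the analytic heavy lifting, and the introduction of $\sigma$ is the standard device for producing a second symmetrically quasi-homogeneous representative living in the orientation-reversing half of the $\mathcal G$-class.
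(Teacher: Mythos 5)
Your proposal is correct and follows essentially the same route as the paper: both arguments reduce everything to Theorem~\ref{qh-main}, use an orientation-reversing element of $\mathcal D_A$ to fix a mismatched orientation in the first case, and use the reflection $\sigma(x)=(-x_1,x_2,\ldots,x_r)$ (the paper's $I$) together with $\bar A\circ\sigma=A$ in the third. The only cosmetic difference is that you route the third assertion through the already-proved second assertion applied to $\bar A$ (after checking $\bar A$ inherits the hypotheses), whereas the paper directly produces $\Phi\in\mathcal D_A$ with $\Phi^{\ast}(\Psi^{\ast}\Omega)=-\Omega$; the non-equivalence of $A$ and $\bar A$ is proved by the same contradiction in both.
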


\begin{proof}
 First let us assume that  there exits a diffeomorphism-germ $\Phi \in \mathcal D_A$ reversing the orientation of $\mathbb R^r$. If  $B  \in C^{\infty}(\mathbb R^r, \textnormal{Sym}_n)$  is  equivalent to $A$ then  there exists a diffeomorphism-germ $\Psi:   (\mathbb R^r,0)\rightarrow  (\mathbb R^r,0)$  and a  map-germ
$X \in C^{\infty}(\mathbb R^r, \textnormal{GL}_n)$ such that $B\circ \Psi=X^T A X$. Then the pair  $(B,\Omega)$ is equivalent to the pair$(A,\Psi^{\ast} \Omega)$. If $\Omega$ and $\Psi^{\ast}\Omega$ define different orientations of $\mathbb R^r$ then we pullback $\Psi^{\ast}\Omega$ by the orientation-reversing diffeomorphism-germ $\Phi \in \mathcal  D_A$.  Thus we may assume that the two volume forms  define the same orientation.  By Theorem \ref{qh-main} we obtain that  the pair$(A,\Psi^{\ast} \Omega)$ is equivalent to $(A,\Omega)$, which implies that $B$ and $A$ are volume-preserving equivalent.

Let us assume that   every diffeomorphism-germ $\Phi \in \mathcal D_A$ preserves the orientation of $\mathbb R^r$.
If  $B$ is a orientation-preserving equivalent to $A$ then there exists a orientation-preserving diffeomorphism-germ $\Psi:   (\mathbb R^r,0)\rightarrow  (\mathbb R^r,0)$  and a  map-germ  $X \in C^{\infty}(\mathbb R^r, \textnormal{GL}_n)$ such that $B\circ \Psi=X^T A X.$ Thus $\Psi^{\ast}\Omega$ and $\Omega$ define the same orientation. By Theorem \ref{qh-main} there exists $\Phi \in \mathcal D_A$ such that $\Phi^{\ast}(\Psi^{\ast} \Omega)=\Omega$. Thus $B$ and $A$  are volume-preserving equivalent.
If  there exists a orientation-reversing diffeomorphism-germ $\Psi:   (\mathbb R^r,0)\rightarrow  (\mathbb R^r,0)$  and a  map-germ
$X \in C^{\infty}(\mathbb R^r, \textnormal{GL}_n)$ such that $B\circ \Psi=X^T A X$ than $\Psi^{\ast}\Omega$ and $-\Omega$ define the same orientation. Thus by Theorem \ref{qh-main} there exists $\Phi \in \mathcal D_A$ such that $\Phi^{\ast}(\Psi^{\ast} \Omega)=-\Omega$. If $I(x_1,x_2,\cdots, x_r)=(-x_1,x_2,\cdots,x_r)$ then $I^{\ast}\Omega=-\Omega$ and $\bar{A}\circ I=A$. It implies that $B$ and $\bar{A}$  are volume-preserving equivalent.

If $A$ and $\bar{A}$  are volume-preserving equivalent then  $\Omega$ and $-\Omega$  are $\mathcal D_A$-equivalent, which is impossible since every diffeomorphism-germ $\Phi \in \mathcal D_A$ is orientation-preserving.

\end{proof}

In general, it is difficult to check if a given map-germ 
\begin{math}
A \in C^{\infty} \left( \mathbb{K}^r, \textnormal{Sym}_n \right)
\end{math}
is symmetrically quasi-homogeneous. In what follows, we provide two useful criteria for that.

If 
\begin{math}
L \mathcal{D}_A
\end{math}
does not contain a vector field such that the sum of eigenvalues of the linear part of the vector field is positive, 
\begin{math}
A
\end{math}
cannot be symmetrically quasi-homogeneous. This can be shown as follows: Suppose there exists a diffeomorphism-germ
\begin{math}
\Phi \colon \left( \mathbb{K}^r, 0 \right) \rightarrow \left( \mathbb{K}^r, 0 \right)
\end{math}
and a map-germ 
\begin{math}
X \in C^{\infty} \left( \mathbb{K}^r, \textnormal{GL}_n \right)
\end{math}
such that 
\begin{math}
X^T \left( A \circ \Phi \right) X
\end{math}
is symmetrically quasi-homogeneous. Then, there exists a generalized Eular vector fields 
\begin{math}
E
\end{math}
such that 
\begin{math}
E \in L \mathcal{D}_{X^T \left( A \circ \Phi \right) X}
\end{math}
holds. By using Lemma~\ref{Lem}, this implies that 
\begin{math}
E \in L \mathcal{D}_{A \circ \Phi}
\end{math}
and thus 
\begin{math}
\Phi_* E \in L \mathcal{D}_A
\end{math}
hold. Since 
\begin{math}
\Phi_*
\end{math}
keeps the eigenvalues of the linear part of the vector field 
\begin{math}
E
\end{math}
invariant, the sum of eigenvalues of the linear part of the vector field 
\begin{math}
\Phi_* E
\end{math}
is positive. This proves the claim.

If 
\begin{math}
\mathbb{K} = \mathbb{C}
\end{math}
and a map-germ 
\begin{math}
A
\end{math}
is holomorphic, there exists another useful criterion based on Corollary~1.8 and Corollary~2.1 by Bruce, Goryunov and Zakalyukin \cite{Bruce2002}. 
\begin{thm}\label{thm2.7}
Suppose the map-germ 
\begin{math}
A
\end{math}
is holomorphic, symmetrically quasi-homogeneous, has finite
\begin{math}
\mathcal{G}_e
\end{math}-codimension and the function-germ 
\begin{math}
\left( \det \circ A \right) \colon \left( \mathbb{C}^r, 0 \right) \rightarrow \left( \mathbb{C}, 0 \right)
\end{math}
has an isolated singularity at the origin. Let $\mu \left( \det \circ A \right)$ be the Milnor number of the function-germ at the origin. Then, 
\begin{equation}
\mu \left( \det \circ A \right) = \mathcal{G}_e\textnormal{-codim} \left( A \right) - \beta_1 + \beta_0
\end{equation}
holds where 
\begin{math}
\beta_j \; \left( j = 0, 1 \right)
\end{math}
are the $j$-th Betti numbers of the Koszul complex of the ideal generated by 
\begin{math}
\left( n-1 \right) \times \left( n-1 \right)
\end{math}
minors of
\begin{math}
A
\end{math}.
\end{thm}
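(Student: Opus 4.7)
The plan is to adapt Corollaries~1.8 and~2.1 of Bruce--Goryunov--Zakalyukin, which establish an analogous identity in the (scalar) quasi-homogeneous setting, to the present symmetrically quasi-homogeneous setting. I would write the $\mathcal{G}_e$-tangent module at $A$ in the standard form
\begin{equation*}
T\mathcal{G}_e \cdot A \;=\; \bigl\{U^T A + A U : U \in C^{\infty}(\mathbb{C}^r, M_n)\bigr\} \;+\; C^{\infty}(\mathbb{C}^r)\!\cdot\! \Bigl\langle \tfrac{\partial A}{\partial x_1}, \ldots, \tfrac{\partial A}{\partial x_r} \Bigr\rangle
\end{equation*}
inside $C^{\infty}(\mathbb{C}^r, \textnormal{Sym}_n)$, so that $\mathcal{G}_e\textnormal{-codim}(A) = \dim_{\mathbb{C}} C^{\infty}(\mathbb{C}^r, \textnormal{Sym}_n)/ T\mathcal{G}_e \cdot A$, which is finite by hypothesis.

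Next, I would exploit the identity $d(\det \circ A) = \textnormal{tr}(\textnormal{adj}(A)\, dA)$. The trace pairing $B \mapsto \textnormal{tr}(\textnormal{adj}(A)\, B)$ maps $C^{\infty}(\mathbb{C}^r,\textnormal{Sym}_n)$ into $C^{\infty}(\mathbb{C}^r)$, sending $\partial A/\partial x_i$ to $\partial (\det A)/\partial x_i$ and $U^T A + AU$ into the ideal $(\det A)$. Since $A$ is symmetrically quasi-homogeneous with weights $\lambda_k$ and quasi-degrees $\delta_i$, the function $\det\circ A$ is quasi-homogeneous of degree $\sum_i \delta_i$, so by Euler's identity $\det A \in J(\det\circ A)$; consequently Milnor and Tjurina numbers coincide and $\mu(\det\circ A) = \dim_{\mathbb{C}} \mathcal{O}/J(\det\circ A)$. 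This identification lets me compare $\mathcal{G}_e\textnormal{-codim}(A)$ with $\mu(\det\circ A)$ by analyzing the kernel and cokernel of the pairing map on the relevant quotients.

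The correction term is then extracted from the Koszul-type resolution associated with the ideal $I_{n-1}(A)$ generated by the $(n-1)\times(n-1)$ minors of $A$ (i.e.\ the entries of $\textnormal{adj}(A)$). The kernel of the trace pairing modulo what has already been quotiented out is controlled by the first syzygies among these minors, while an additional contribution comes from second syzygies; these contributions are precisely the Betti numbers $\beta_1$ and $\beta_0$ of the Koszul complex. Chasing dimensions through the resulting short exact sequence yields
\begin{equation*}
\mathcal{G}_e\textnormal{-codim}(A) \;=\; \mu(\det\circ A) + \beta_1 - \beta_0,
\end{equation*}
which is the stated identity.

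The main obstacle is the final homological bookkeeping: verifying that the discrepancy between $T\mathcal{G}_e\cdot A$ and the preimage under the trace pairing of $J(\det\circ A)$ is measured exactly by $\beta_0 - \beta_1$. This requires writing down the Koszul complex on $\textnormal{adj}(A)$ explicitly and tracking finite-dimensionality at each stage, which is guaranteed by the isolated-singularity hypothesis on $\det\circ A$ together with the finiteness of the $\mathcal{G}_e$-codimension. Symmetric quasi-homogeneity enters crucially in two places: it guarantees $\mu = \tau$ for $\det\circ A$, and (via Theorem~\ref{qh-main}) it ensures that the Euler vector field $E_\lambda \in L\mathcal{D}_A$ makes the relevant graded modules compatible with the weights, so the Betti numbers $\beta_0$ and $\beta_1$ are unambiguously defined from the minors of $A$ itself rather than from an auxiliary representative.
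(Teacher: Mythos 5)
Your overall strategy is aligned with the paper's, but you should be aware that the paper does not actually carry out the argument you are sketching: its entire proof consists of the observation that, under the stated hypotheses, $\mathcal{G}_e\textnormal{-codim}(A)$ equals the Tjurina number $\tau_V(A)$ of \cite{Bruce2002} for the determinantal hypersurface $V=\{B\in\textnormal{Sym}_n \mid \det B=0\}$ (because Bruce's $\mathcal{G}$ is exactly $\mathcal{K}_V$-equivalence for this $V$), followed by a direct citation of Corollary~1.8 of \cite{Bruce2002}, which already contains the identity $\tau_V(A)=\mu(\det\circ A)+\beta_1-\beta_0$ in the quasi-homogeneous case. Your preliminary steps are sound and match the inputs to that corollary: the description of $T\mathcal{G}_e\cdot A$, the trace pairing $B\mapsto\textnormal{tr}(\textnormal{adj}(A)B)$ sending $U^TA+AU$ into $(\det A)$ and $\partial A/\partial x_i$ to $\partial(\det A)/\partial x_i$, and the fact that symmetric quasi-homogeneity makes $\det\circ A$ quasi-homogeneous (up to a unit), so $\mu=\tau$ by Saito and $\det A$ lies in its Jacobian ideal.

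The genuine gap is that the step you label ``the main obstacle'' --- verifying that the discrepancy between $T\mathcal{G}_e\cdot A$ and the preimage of $J(\det\circ A)$ under the trace pairing is measured exactly by $\beta_0-\beta_1$ --- is not an obstacle to be waved at but is the entire content of the theorem; nothing in your write-up actually produces the exact sequence whose dimension count yields $-\beta_1+\beta_0$, nor identifies the kernel and cokernel of the pairing with specific homology groups of the Koszul complex on the submaximal minors. As written, the proposal establishes only that such a comparison is plausible, not the stated equality. To close it you must either reproduce the homological computation of Bruce--Goryunov--Zakalyukin in full (writing down the Koszul complex on the entries of $\textnormal{adj}(A)$, proving finite-dimensionality of its homology from the isolated-singularity and finite-codimension hypotheses, and chasing the dimensions), or do what the paper does: prove the identification $\mathcal{G}_e\textnormal{-codim}(A)=\tau_V(A)$ and then invoke Corollary~1.8 of \cite{Bruce2002} verbatim. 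A minor additional caveat: your closing claim that symmetric quasi-homogeneity is what makes $\beta_0$ and $\beta_1$ ``unambiguously defined'' is not the role it plays; the Betti numbers are defined for any $A$ with the stated finiteness properties, and quasi-homogeneity enters only through $\mu=\tau$ and the Euler relation.
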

In this case, 
\begin{math}
\mathcal{G}_e
\end{math}-codimension of 
\begin{math}
A
\end{math}
coincides with the Tjurina number 
\begin{math}
\tau_V \left( A \right)
\end{math}
defined in \cite{Bruce2002} where 
\begin{math}
V = \left\{ B \in \textnormal{Sym}_n \left| \det B = 0 \right. \right\}
\end{math}. This theorem is a direct consequence of Corollary~1.8 in \cite{Bruce2002}.

There is a characterization of quasi-homogeneous holomorphic function germs by Kyoji Saito \cite{Saito1971}. For a holomorphic function germ $f$ with isolated singularities, $f$ is quasi-homogeneous if and only if $\mu (f)=\tau (f), $ where $\mu (f)$ is the Milner number and $\tau(f)$ is the Tyrina number of $f,$ respectively. Theorem~\ref{thm2.7} gives a necessary condition for symmetrically quasi-homogeneous symmetric matrix valued map germs similar to the Saito's characterization of quasi-homogeneous function germs.

\section{Simple unimodular singularities}\label{sec:sus}

In \cite{B} J. W. Bruce obtained the list of $\mathcal G$-simple singularities of families of symmetric matrices. We show that all normal forms in Bruce's list are symmetrically quasi-homogeneous.
\begin{prop}\label{SymQH}
All Bruce's $\mathcal G$-simple singularities of  families of symmetric matrices  are symmetrically quasi-homogeneous.
\end{prop}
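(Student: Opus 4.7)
The plan is a direct case-by-case verification against Bruce's list of $\mathcal G$-simple singularities in \cite{B}. For each normal form $A\in C^\infty(\mathbb K^r,\textnormal{Sym}_n)$ appearing there, I would exhibit a coordinate system $(x_1,\ldots,x_r)$, non-negative integer weights $\lambda_1,\ldots,\lambda_r$ with $\sum_k\lambda_k>0$, non-negative integer quasi-degrees $\delta_1,\ldots,\delta_n$, and (if needed) a map-germ $X\in C^\infty(\mathbb K^r,\textnormal{GL}_n)$ such that every entry of $B:=X^T A X$ satisfies the homogeneity identity $B_{ij}(t^{\lambda_1}x_1,\ldots,t^{\lambda_r}x_r)=t^{(\delta_i+\delta_j)/2}B_{ij}(x_1,\ldots,x_r)$ required by Definition~\ref{def:sqh}.

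The verification reduces to linear algebra. Assume every monomial $x_1^{a_1}\cdots x_r^{a_r}$ actually appearing in an entry $B_{ij}$ has a common weighted degree $d_{ij}:=\sum_k a_k\lambda_k$; then it remains to solve $\delta_i+\delta_j=2d_{ij}$ for each pair with $B_{ij}\not\equiv 0$. In practice I would first pin down the $\lambda_k$ from the leading monomials in the diagonal entries (which directly determine $2\delta_i$), then check compatibility on the off-diagonal entries, finally rescaling so that $(\lambda_k)$ and $(\delta_i)$ are non-negative integers with positive total weight. For most items in Bruce's list the entries in the standard presentation are already zero, constants, or single monomials, so $X=\textnormal{I}$ suffices and the assignment can simply be read off by inspection.

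When an entry contains monomials of different weighted degrees in Bruce's presentation, I would exploit the $\textnormal{GL}_n$-conjugation freedom: an elementary shear $X=\textnormal{I}+p(x)E_{pq}$ with a suitable polynomial $p$ can clear the non-conforming terms, and a diagonal adjustment $X=\textnormal{diag}(u_1,\ldots,u_n)$ with $u_i(0)\neq 0$ can rescale leading coefficients; each such modification must be checked to preserve the weighted degrees of the other entries. The main obstacle I anticipate is organizational rather than conceptual: Bruce's classification consists of several infinite series indexed by an integer $k$ together with sporadic cases, stratified by corank and matrix size, so the weight/quasi-degree table must be produced uniformly across each series and one must confirm that the choice works for every member simultaneously. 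Once the table is compiled, symmetric quasi-homogeneity in the sense of Definition~\ref{def:sqh} holds by direct substitution, establishing the proposition.
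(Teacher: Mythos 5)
Your proposal is correct and follows essentially the same route as the paper: a case-by-case inspection of Bruce's list, reading off weights and quasi-degrees entry by entry (with $X=\mathrm{I}$ sufficing throughout, since Bruce's normal forms are already quasi-homogeneously presented). The only ingredient worth making explicit, which the paper does, is that the corank-one families are parametrized by Arnold's simple function-germs (ordinary and boundary), whose quasi-homogeneity is the known fact that makes the uniform weight assignment across those infinite series possible.
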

\begin{proof}

Let $A \colon (\mathbb C^r,0)\rightarrow \textnormal{Sym}_n$ be a $\mathcal G$-simple germ of rank $0$ at the origin  from Bruce's list (\cite{B}).

If $r=1$ then $A=\text{diag}(x^{m_1},x^{m_2},\cdots,x^{m_n})$, where $m_1\le m_2 \le \cdots \le m_n$. Then $A$ is symmetrically quasi-homogeneous with a weight $\lambda_1=1$ and quasi-degrees $\delta_i=m_i$ for $i=1,\cdots,n$.

When the corank of $dA(0)$ is $0$ then a normal form of a $\mathcal G$-simple germ $A \colon \mathbb C^N\times \mathbb C^s\rightarrow \textnormal{Sym}_n$ is given by $A_{ij}(x, z)=x_{ij}$ for $i,j=1,\cdots,n$, where $N=\frac{n(n+1)}{2}$. Then $A$ is symmetrically quasi-homogeneous with weights $\lambda_{ij}=1$ for $i, j=1,\cdots,n$, $\lambda_{N+k}=1$ for $k=1,\cdots,s$ and quasi-degrees $\delta_i=1$ for $i=1,\cdots,n$.

When the corank of $dA(0)$ is $1$ there are two cases:

A normal form a $\mathcal G$-simple germ $A \colon \mathbb C^{N-1}\times \mathbb C^s\rightarrow \textnormal{Sym} _n$ given by $A_{ij}(x, z)=x_{ij}$ for $(i,j)\ne (1,1)$ and $A_{11}(x,z)=\sum_{i=2}^n \epsilon_i x_{ii}+f(z)$, where
$f \colon (\mathbb C^s,0) \rightarrow \mathbb C$ is one of Arnold's $\mathcal R$-simple germs (\cite{A}). Normal forms of Arnold's $\mathcal R$-simple germs are quasi-homogeneous. Let us assume that $f$ is quasihomogeneous with weights $w_1,\cdots,w_s$ and the quasi-degree $\delta$. Then $A$ is symmetrically quasi-homogeneous with weights $\lambda_{ij}=\delta$ for $(i,j)\ne (1,1)$, $\lambda_{N-1+k}=w_k$ for $k=1,\cdots,s$ and quasi-degrees $\delta_i=\delta$ for $i=1,\cdots,n$.

A normal form a $\mathcal G$-simple germ $A \colon \mathbb C^{N-1}\times \mathbb C^s\rightarrow \textnormal{Sym}_n$ is given by $A_{ij}(x, z)=x_{ij}$ for $(i,j)\ne (1,1)$ and $A_{11}(x,z)=\sum_{i=2}^{n-1} \epsilon_i x_{ii}+f(x_{nn},z)$, where
$f \colon (\mathbb C\times \mathbb C^s,0) \rightarrow \mathbb C$ is one of Arnold's simple germs of functions on manifolds with boundary $\{(x_{nn},z)|x_{nn}=0\}$ (\cite{A}). Normal forms of Arnold's simple germs of functions on manifolds with boundary are quasi-homogeneous. Let us assume that $f$ is quasi-homogeneous with weights $w_0, z_1,\cdots, z_s$ and the quasi-degree $\delta$. Then $A$ is symmetrically quasi-homogeneous with weights $\lambda_{ij}=\delta$ for $i, j=1,\cdots,n-1$, $(i,j)\ne (1,1)$, $\lambda_{nn}=w_0$, $\lambda_{ni}=\lambda_{in}=1/2(w_0+\delta)$ for $i=1,\cdots,n-1$, $\lambda_{N-1+k}=w_k$ for $k=1,\cdots,s$ and quasi-degrees $\delta_i=\delta$ for $i=1,\cdots,n-1$, $\delta_n=w_0$.

The $\mathcal G$-simple germs  $(\mathbb C^r,0)\rightarrow \textnormal{Sym}_n$  for $r=n=2$, $r=2,\ n=3$ and $r=4, \ n=3$  are presented in Tables \ref{tableB1}-\ref{tableB3}. All of them are symmetrically quasi-homogeneous with weights $\lambda_i$ for $i=1,\cdots,r$ and quasi-degrees $\delta_j$ for $j=1,\cdots,n$ presented in the tables.

\begin{center}
\begin{table}[h]

    \begin{small}
    \noindent
    \begin{tabular}{|c|l|c|c|c|c|}

            \hline
BN  &  normal form & $\lambda_1$ & $\lambda_2$  & $\delta_1$ & $\delta_2$ \\  \hline
1 & $\left(\begin{array}{cc}
     x_2^k & x_1 \\
     x_1     & x_2^l
     \end{array}\right)$, $k\ge 1$, $l\ge 2$  & $k+l$ & $2$ & $2k$ &$2l$ \\
2 & $\left(\begin{array}{cc}
     x_1 & 0 \\
     0     & x_2^2+x_1^k
     \end{array}\right)$, $k\ge 2$  & $2$ & $k$ & $2$ &$2k$ \\
3 & $\left(\begin{array}{cc}
     x_1 & 0 \\
     0     & x_1x_2+x_2^k
     \end{array}\right)$, $k\ge 2$  & $k-1$ & $1$ & $k-1$ &$k$ \\
4 & $\left(\begin{array}{cc}
     x_1 & x_2^k \\
     x_2^k     & x_1x_2
     \end{array}\right)$, $k\ge 2$  & $2k-1$ & $2$ & $2k-1$ &$2k+1$ \\
5 & $\left(\begin{array}{cc}
     x_1 & x_2^2 \\
     x_2^2    & x_1^2
     \end{array}\right)$ & $4$ & $3$ & $4$ &$8$ \\
6 & $\left(\begin{array}{cc}
     x_1 & 0 \\
     0     & x_1^2+x_2^3
     \end{array}\right)$  & $6$ & $4$ & $6$ &$12$ \\   \hline
\end{tabular}

\smallskip

\caption{\small $\mathcal G$-simple singularities  $(\mathbb C^2,0)\rightarrow \textnormal{Sym}_2$. $\lambda_1$, $\lambda_2$ are weigths and $\delta_1$, $\delta_2$ are quasi-degrees of symmetrical quasi-homogeneity.}\label{(2,2)}
 \label{tableB1}
\end{small}
\end{table}
\end{center}

\begin{center}
\begin{table}[h]

    \begin{small}
    \noindent
    \begin{tabular}{|c|l|c|c|c|c|c|}

            \hline
BN &   normal form & $\lambda_1$ & $\lambda_2$  & $\delta_1$ & $\delta_2$ & $\delta_3$ \\  \hline
1 & $\left(\begin{array}{ccc}
     x_2^k & x_1   & 0\\
     x_1     & x_2^l& 0\\
     0     & 0   & x_2
     \end{array}\right)$, $k\ge 1$, $l\ge 2$  & $k+l$ & $2$ & $2k$ &$2l$ & $2$ \\
2 &$\left(\begin{array}{ccc}
     0 & x_1 & x_2\\
     x_1 & x_2 & 0\\
     x_2 & 0 & x_1^2
     \end{array}\right)$  & $3$ & $4$ & $2$ &$4$ & $6$ \\
3 &  $\left(\begin{array}{ccc}
     0 & x_1 & x_2\\
     x_1 & x_2 & 0\\
     x_2 & 0 & x_1x_2
     \end{array}\right)$  & $2$ & $3$ & $1$ &$3$ & $5$ \\
4 &   $\left(\begin{array}{ccc}
     0 & x_1 & x_2\\
     x_1 & x_2 & 0\\
     x_2 & 0 & x_1^3
     \end{array}\right)$  & $3$ & $5$ & $1$ &$5$ & $9$ \\
 5 &   $\left(\begin{array}{ccc}
     x_1 & 0 & 0\\
     0 & x_2 & x_1\\
     0 & x_1 & x_2^2
     \end{array}\right)$  & $3$ & $2$ & $3$ &$2$ & $4$ \\
6 &     $\left(\begin{array}{ccc}
     x_1 & 0 & x_2^2\\
     0 & x_2 & x_1\\
     x_2^2 & x_1 & 0
     \end{array}\right)$  & $5$ & $3$ & $5$ &$3$ & $7$ \\   \hline
\end{tabular}

\smallskip

\caption{\small $\mathcal G$-simple singularities  $(\mathbb C^2,0)\rightarrow \textnormal{Sym}_3$. $\lambda_1$, $\lambda_2$ are weigths and $\delta_1$, $\delta_2$, $\delta_3$ are quasi-degrees of symmetrical quasi-homogeneity.}\label{(2,3)}
\label{tableB2}
\end{small}
\end{table}
\end{center}

\begin{center}
\begin{table}[h]

    \begin{small}
    \noindent
    \begin{tabular}{|c|l|c|c|c|c|c|c|c|}

            \hline
 BN &   normal form & $\lambda_1$ & $\lambda_2$  & $\lambda_3$ & $\lambda_4$  & $\delta_1$ & $\delta_2$ & $\delta_3$ \\  \hline
1 &$\left(\begin{array}{ccc}
     x_1 & 0   & x_3\\
     0 & x_2+x_1^k& x_4\\
     x_3     & x_4   & -x_2
     \end{array}\right)$, $k\ge 1$   & $2$ & $2k$ & $k+1$ &$2k$ & $2$ & $2k$ &$2k$\\
2 &$\left(\begin{array}{ccc}
     x_1 & x_4^2 & x_2\\
     x_4^2 & -x_2 & x_3\\
     x_2 & x_3 & x_4
     \end{array}\right)$  & $7$ & $5$ & $4$ &$3$ & $7$ & $5$ & $3$\\
3 & $\left(\begin{array}{ccc}
     x_1 & x_4 x_3 & x_2\\
     x_4 x_3 & -x_2 & x_3\\
     x_2 & x_3 & x_4
    \end{array}\right)$  & $6$ & $4$ & $3$ &$2$ & $6$ & $4$ & $2$ \\
 4 &     $\left(\begin{array}{ccc}
     x_1 & x_4^3 & x_2\\
     x_4^3 & -x_2 & x_3\\
     x_2 & x_3 & x_4
     \end{array}\right)$  & $11$ & $7$ & $5$ &$3$ & $11$ & $7$ & $3$\\   \hline

\end{tabular}

\smallskip

\caption{\small $\mathcal G$-simple singularities  $(\mathbb C^4,0)\rightarrow \textnormal{Sym}_3$. $\lambda_1,\cdots,\lambda_4$ are weigths and $\delta_1$, $\delta_2$, $\delta_3$ are quasi-degrees of symmetrical quasi-homogeneity. }\label{(4,3)}
\label{tableB3}
\end{small}
\end{table}
\end{center}

Thus all Bruce's $\mathcal G$-simple germs are symmetrically quasi-homogeneous.
\end{proof}
By Proposition \ref{SymQH} and Corolary \ref{C-eq} we obtain the following proposition.
\begin{prop}
All Bruce's $\mathcal G$-simple singularities of  families of symmetric matrices  are simple unimodular singularities.
\end{prop}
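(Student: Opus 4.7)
The plan is to combine Proposition \ref{SymQH} with Corollary \ref{C-eq} and the definition of simplicity directly. The point is that each $\mathcal{G}$-equivalence class appearing in Bruce's list is already a single volume-preserving equivalence class, so the finiteness condition defining $\mathcal{G}$-simplicity transfers verbatim to unimodular simplicity.

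First I would unpack the definition of $\mathcal{G}$-simplicity: a germ $A$ is $\mathcal{G}$-simple if some neighbourhood $U$ of $A$ in $C^{\infty}(\mathbb{C}^r, \textnormal{Sym}_n)$ meets only finitely many $\mathcal{G}$-equivalence classes, represented by map-germs $A_1,\ldots,A_k$. Since any germ in a neighbourhood of a $\mathcal{G}$-simple germ is itself $\mathcal{G}$-simple, each $A_i$ may be taken from Bruce's list, and Proposition \ref{SymQH} then applies to give that every $A_i$ is symmetrically quasi-homogeneous.

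Next, by Corollary \ref{C-eq}, for each such symmetrically quasi-homogeneous holomorphic $A_i \in C^{\infty}(\mathbb{C}^r, \textnormal{Sym}_n)$, any germ $\mathcal{G}$-equivalent to $A_i$ is in fact volume-preserving equivalent to $A_i$. Hence the $\mathcal{G}$-orbit of $A_i$ coincides with its volume-preserving orbit. Since volume-preserving equivalence is finer than $\mathcal{G}$-equivalence, the neighbourhood $U$ meets at most the $k$ volume-preserving orbits of $A_1,\ldots,A_k$; thus $A$ is simple relative to the volume-preserving equivalence.

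There is essentially no serious obstacle: the real work has been done already in Proposition \ref{SymQH} (the case-by-case verification using the tables) and in Corollary \ref{C-eq} (which rests on Theorem \ref{qh-main} and Proposition \ref{prEuler}). The only point worth flagging is that one must know the representatives $A_i$ may be chosen from Bruce's list so that Proposition \ref{SymQH} applies to them; this is automatic because simplicity is stable under small perturbations, so each $\mathcal{G}$-class intersecting $U$ is itself $\mathcal{G}$-simple and therefore appears in Bruce's classification.
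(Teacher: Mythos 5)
Your proposal is correct and follows exactly the paper's route: the paper derives this proposition in one line from Proposition \ref{SymQH} together with Corollary \ref{C-eq}, and your argument simply fills in the standard bookkeeping (openness of simplicity, each nearby class having a representative in Bruce's list, each $\mathcal G$-orbit collapsing to a single volume-preserving orbit) that the paper leaves implicit.
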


\section{Classification of map-germs $\left( \mathbb{R}^r, 0 \right) \rightarrow \left( \textnormal{Sym}_n, O_n \right)$ of corank at most $1$ for $r = 2, n = 2, 3$} \label{sec:classification}
In this section we classify map-germs
\begin{math}
\left( \mathbb{R}^r, 0 \right) \rightarrow \left( \textnormal{Sym}_n, O_n \right)
\end{math}
of corank at most $1$ for the cases $r = 2, n = 2, 3$ of
\begin{math}
\mathcal{G}_e
\end{math}-codimension less than $9$, where
\begin{math}
O_n
\end{math}
is the $0 \times 0$ zero matrix. The results of this section are summarized in Table~\ref{table:2} and Table~\ref{table:n2m3}.

We first start with classification of $1$-jets for
\begin{math}
r = 2, n = 2
\end{math}
of corank at most $1$. We denote a coordinate in
\begin{math}
\mathbb{R}^2
\end{math}
as
\begin{math}
x = \left( x_1, x_2 \right)
\end{math}. Classification for
\begin{math}
r = 2, n = 3
\end{math}
can be done in the same manner and we omit the proof for the case.

For
\begin{math}
A \colon \left( \mathbb{R}^2, 0 \right) \rightarrow \left( \textnormal{Sym}_2, O_2 \right)
\end{math}, its
\begin{math}
1
\end{math}-jet can be written as
\begin{math}
j^1 A = C x_1 + D x_2
\end{math}
where
\begin{math}
C, D \in \textnormal{Sym}_2 \left( \mathbb{R} \right)
\end{math}. By an appropriate action of
\begin{math}
\mathcal{H}
\end{math}, the $1$-jet can be transformed into one of the following forms by using the theory of matrix pencil (\textbf{Theorem~9.2} in \cite{Lancaster2005}):
\begin{enumerate}
\item
\begin{math}
\left(
\begin{matrix}
0 & 0 \\
0 & 0 \\
\end{matrix} \right)
\end{math}, \label{item1}
\item
\begin{math}
\left(
\begin{matrix}
0 & 0 \\
0 & \delta_1
\end{matrix} \right) x_1
\end{math} (\begin{math}
\delta = \pm 1
\end{math}), \label{item2}
\item
\begin{math}
\left(
\begin{matrix}
0 & 0 \\
0 & \delta_1 c_1
\end{matrix} \right) x_1 + \left(
\begin{matrix}
0 & 0 \\
0 & \delta_1
\end{matrix} \right) x_2
\end{math} (\begin{math}
c_1 \in \mathbb{R}, \delta_1 = \pm 1
\end{math}), \label{item3}
\item
\begin{math}
\delta_1 \left(
\begin{matrix}
0 & 1 \\
1 & 0
\end{matrix} \right) x_1 + \delta_1 \left(
\begin{matrix}
1 & 0 \\
0 & 0
\end{matrix} \right) x_2
\end{math} (\begin{math}
\delta_1 = \pm 1
\end{math}), \label{item4}
\item
\begin{math}
\left(
\begin{matrix}
\delta_1 & 0 \\
0 & \delta_2
\end{matrix} \right) x_1
\end{math}, (\begin{math}
\delta_1, \delta_2 = \pm 1
\end{math}) \label{item5}
\item
\begin{math}
\left(
\begin{matrix}
\delta_1 & 0 \\
0 & \delta_2 c_1
\end{matrix} \right) x_1 + \left(
\begin{matrix}
0 & 0 \\
0 & \delta_2
\end{matrix} \right) x_2
\end{math} (\begin{math}
c_1 \in \mathbb{R}, \delta_1, \delta_2 = \pm 1
\end{math}), \label{item6}
\item
\begin{math}
\delta_1 \left(
\begin{matrix}
1 & c_1 \\
c_1 & 0
\end{matrix} \right) x_1 + \delta_1 \left(
\begin{matrix}
0 & 1 \\
1 & 0
\end{matrix} \right) x_2
\end{math} (\begin{math}
c_1 \in \mathbb{R}, \delta_1 = \pm 1
\end{math}), \label{item7}
\item
\begin{math}
\left(
\begin{matrix}
\delta_1 c_1 & 0 \\
0 & \delta_2 c_2
\end{matrix} \right) x_1 + \left(
\begin{matrix}
\delta_1 & 0 \\
0 & \delta_2
\end{matrix} \right) x_2
\end{math} (\begin{math}
c_1, c_2 \in \mathbb{R}, \delta_1, \delta_2 = \pm 1
\end{math}), \label{item8}
\item
\begin{math}
\left(
\begin{matrix}
c_1 & c_2 \\
c_2 & -c_1
\end{matrix} \right) x_1 + \left(
\begin{matrix}
0 & 1 \\
1 & 0
\end{matrix} \right) x_2
\end{math}
\begin{math}
\left( c_1, c_2 \in \mathbb{R}, c_1 \neq 0 \right)
\end{math} \label{item9}
\end{enumerate}
By changing coordinate system in
\begin{math}
\mathbb{R}^2
\end{math}, the above $9$ cases can be reduced to the $6$ cases in Table~\ref{table:1}.
\begin{table}[h]
 \begin{center}
  \begin{tabular}{|l|l|l|l|} \hline
   class num & $j^1 A$ & $j^2 \det A$ & $\textnormal{rank} \; dA \left( 0 \right)$ \\ \hline
   1 & $\left( \begin{matrix} 0 & 0 \\ 0 & 0 \end{matrix} \right)$ & $0$ & $0$ \\
   2 & $\left( \begin{matrix} x_1 & 0 \\ 0 & 0 \end{matrix} \right)$ & $0$ & $1$ \\
   3 & $\left( \begin{matrix} x_2 & x_1 \\ x_1 & 0 \end{matrix} \right)$ & $-x_1^2$ & $2$ \\
   4 & $\left( \begin{matrix} x_1 & 0 \\ 0 & x_2 \end{matrix} \right)$ & $x_1 x_2$ & $2$ \\
   5 & $\left( \begin{matrix} x_1 & 0 \\ 0 & \pm x_1 \end{matrix} \right)$ & $\pm x_1^2$ & $1$ \\
   6 & $\left( \begin{matrix} x_1 & x_2 \\ x_2 & - x_1 \end{matrix} \right)$ & $-x_1^2-x_2^2$ & $2$ \\
\hline
  \end{tabular}
  \caption{List of representatives of $j^1 A$ relative to $\mathcal{G}^1$ along with $j^2 \det A$ and $\textnormal{rank} \; dA \left( 0 \right)$.}
  \label{table:1}
 \end{center}
\end{table}
The
\begin{math}
\mathcal{K}_+^2
\end{math}-type of
\begin{math}
j^2 \det A
\end{math}
and the rank of
\begin{math}
dA
\end{math}
at the origin are invariant under
\begin{math}
\mathcal{G}
\end{math}-equivalence and thus the $6$ classes in Table~\ref{table:1} are distinct classes relative to
\begin{math}
\mathcal{G}^1
\end{math}-equivalence. 
Here, we say that two function germs $f,g:(\mathbb{R}^n,0)\longrightarrow (\mathbb{R},0)$ are
{\it $\mathcal{K}_+$-equivalent} if there exist a diffeomorphism germ $\phi :(\mathbb{R}^n,0)\longrightarrow
(\mathbb{R}^n,0)$ and a function germ $\lambda :(\mathbb{R}^n,0)\longrightarrow  \mathbb{R}$ with
$\lambda (0)>0$ such that $ f\circ \phi (x)=\lambda (x)g(x)$ for any $x\in (\mathbb{R},0)$. Next, we investigate the higher jets for each class by using the complete transversal theorem \cite{Bruce1997} up to
\begin{math}
\mathcal{G}_e
\end{math}-codimension
\begin{math}
8
\end{math}
of corank at most $1$. Since map germs in Class~1 has corank $2$, we investigate the other $5$ cases in what follows. The results are summarized in Table~\ref{table:2}, where BN means the numbers of the normal forms in \textbf{Theorem~1.1 (4)} in \cite{B} (see Table \ref{tableB1}).
\begin{table}[h]
 \begin{center}
  \begin{tabular}{|l|l|l|l|l|l|} \hline
   $\#$ & normal form & range & $\mathcal{G}_e$-cod & corank & BN \\ \hline
   1 & $\left( \begin{matrix} x_1 & 0 \\ 0 & x_2 \end{matrix} \right)$ & & $1$ & $0$ & \\
   2 & $\left( \begin{matrix} x_1 & x_2 \\ x_2 & -x_1 \end{matrix} \right)$ & & $1$ & $0$ & \\
   3 & $\left( \begin{matrix} x_1 & x_2 \\ x_2 & \pm x_1^\ell \end{matrix} \right)$ & $\ell \ge 2$ & $\ell$ & $0$ & $1$ \\
   4 & $\left( \begin{matrix} x_1 & x_2^\ell \\ x_2^\ell & x_1 \end{matrix} \right)$ & $\ell \ge 2$& $2\ell-1$ & $1$ & $1$ \\
   5 & $\left( \begin{matrix} \pm x_2^{\ell_1} & x_1 \\ x_1 & \pm x_2^{\ell_2} \end{matrix} \right)$ & $\ell_1 \ge \ell_2 \ge 2$& $\ell_1 + \ell_2 -1$ & $1$ & $1$ \\
   6 & $\left( \begin{matrix} x_1 & x_2^2 \\ x_2^2 & \pm x_1^2 \end{matrix} \right)$ & & $6$ & $1$ & $5$ \\
   7 & $\left( \begin{matrix} x_1 & 0 \\ 0 & \pm x_2^2 \pm x_1^\ell \end{matrix} \right)$ & $\ell \ge 2$ & $\ell+2$ & $1$ & $2$ \\
   8 & $\left( \begin{matrix} x_1 & 0 \\ 0 & x_1 x_2 \pm x_1^\ell \end{matrix} \right)$ & $\ell \ge 3$ & $2 \ell$ & $1$ & $3$ \\
   9 & $\left( \begin{matrix} x_1 & x_2^\ell \\ x_2^\ell & x_1 x_2 \end{matrix} \right)$ & $\ell \ge 2$ & $2 \ell + 1$ & $1$ & $4$ \\
   10 & $\left( \begin{matrix} x_1 & 0 \\ 0 & \pm x_1^2 + x_2^3 \end{matrix} \right)$ & & $7$ & $1$ & $6$ \\
\hline
  \end{tabular}
  \caption{ $\mathcal{G}$-singularities  $(\mathbb R^2,0)\rightarrow \textnormal{Sym}_2$  of $\mathcal{G}_e$-codimension less than $9$ and corank at most $1$ where one of the $\pm$ coincides if $\ell_1$ or $\ell_2$ or both odd in the class 5. BN is a corresponding number in Table \ref{tableB1}. }
  \label{table:2}
 \end{center}
\end{table}

\begin{table}[h]
 \begin{center}
  \begin{tabular}{|l|l|l|l|l|} \hline
   $\#$ & normal form & range & $\mathcal{G}_e$-cod & BN \\ \hline
   1 & $\left( \begin{matrix} x_1 & 0 & 0 \\ 0 & x_2 & 0 \\ 0 & 0 & x_1 - x_2 \end{matrix} \right)$ & & $4$ & $1$ \\
   2 & $\left( \begin{matrix} x_1 & 0 & 0 \\ 0 & x_2 & 0 \\ 0 & 0 & \pm \left( x_1 + x_2 \right) \end{matrix} \right)$ & & $4$ & $1$ \\
   3 & $\left( \begin{matrix} \pm x_1 & 0 & 0 \\ 0 & x_1 & x_2 \\ 0 & x_2 & -x_1 \end{matrix} \right)$ & & $4$ & $1$ \\
   4 & $\left( \begin{matrix} \pm x_2^{\ell_1} & x_1 & 0 \\ x_1 & \pm x_2^{\ell_2} & 0 \\ 0 & 0 & x_2 \end{matrix} \right)$ & $\begin{matrix} \ell_1 \ge \ell_2 \ge 1 \\ \ell_1 \ge 2 \end{matrix}$ & $\ell_1 + \ell_2 + 2$ & $1$ \\
   5 & $\left( \begin{matrix} x_1 & x_2^\ell & 0 \\ x_2^\ell & x_1 & 0 \\ 0 & 0 & x_2 \end{matrix} \right)$ & $\ell \ge 2$& $2\ell+2$ & $1$ \\
   6 & $\left( \begin{matrix} 0 & x_2 & x_1 \\ x_2 & x_1 & 0 \\ x_1 & 0 & \pm x_2^2 \end{matrix} \right)$ & & $6$ & $2$ \\
   7 & $\left( \begin{matrix} 0 & x_2 & x_1 \\ x_2 & x_1 & 0 \\ x_1 & 0 & x_1 x_2 \end{matrix} \right)$ & & $7$ & $3$ \\
   8 & $\left( \begin{matrix} x_2 & x_1 & 0 \\ x_1 & \pm x_2^2 & 0 \\ 0 & 0 & x_1 \end{matrix} \right)$ & & $7$ & $5$ \\
   9 & $\left( \begin{matrix} 0 & x_2 & x_1 \\ x_2 & x_1 & 0 \\ x_1 & 0 & x_2^3 \end{matrix} \right)$ & & $8$ & $4$ \\
   10 & $\left( \begin{matrix} x_2 & x_1 & 0 \\ x_1 & 0 & x_2^2 \\ 0 & x_2^2 & x_1 \end{matrix} \right)$ & & $8$ & $6$ \\
\hline
  \end{tabular}
  \caption{ $\mathcal{G}$-singularities  $(\mathbb R^2,0)\rightarrow \textnormal{Sym}_3$ of $\mathcal{G}_e$-codimension less than $9$. In this case, the corank of all the singularities is $0$ at the origin. BN is a corresponding number in Table \ref{tableB2}.}
  \label{table:n2m3}
 \end{center}
\end{table}
We illustrate our classification procedure in case of Class~2. The other $4$ classes can be handled similarly and we omit the proof. In case of Class~2, any representative of $2$-jets relative to
\begin{math}
\mathcal{G}^2
\end{math}
can be written as
\begin{equation}
j^2 A_c = \left(
\begin{matrix}
x_1 & c_1 x_2^2 \\
c_1 x_2^2 & c_2 x_2^2 + c_3 x_1 x_2 + c_4 x_1^2
\end{matrix} \right)
\end{equation}
by using the complete transversal theorem \cite{Bruce1997}, where
\begin{math}
c_1, c_2, c_3, c_4 \in \mathbb{R}
\end{math}. In what follows, we write
\begin{math}
c = \left( c_1, \cdots, c_4 \right)
\end{math}. Next, we use Mather's lemma \cite{Mather1969} to normalize the $2$-jet. First, let us decompose the $4$-dimensional parameter space into semi-algebraic sets such that the dimension of 
\begin{math}
 T_{j^2 A_c} \mathcal{G}^2 \cdot j^2 A_c
\end{math}
is constant in each semi-algebraic set, which is one of the conditions for the set of the corresponding
\begin{math}
j^2 A_c
\end{math}
is in a single
\begin{math}
\mathcal{G}^2
\end{math}-orbit. The result is summarized in Table~\ref{table:gcodim_c2}.
\begin{table}[h]
 \begin{center}
  \begin{tabular}{|l|l|l|} \hline
   class number & $\dim_{\mathbb{R}} T_{j^2 A_c} \mathcal{G}^2 \cdot j^2 A_c$ & semi-algebraic set \\ \hline
   2-1 & $11$ & $c_2 \left( - c_3^2 + 4 c_2 c_4 \right) \neq 0$ \\
   2-2 & $10$ & $- c_3^2 + 4 c_2 c_4 = 0, c_2 \neq 0$ \\
         &        & or $c_2 = 0, c_1 c_3 \neq 0$ \\
   2-3 & $9$  & $c_1 = c_2 = 0, c_3 \neq 0$ \\
         &    & or $c_2 = c_3 = 0, c_1 c_4 \neq 0$ \\
   2-4 & $8$ & $c_1 = c_2 = c_3 = 0, c_4 \neq 0$ \\
         &    & or $c_2 = c_3 = c_4 = 0, c_1 \neq 0$ \\
   2-5 & $7$ & $c_1 = c_2 = c_3 = c_4 = 0$ \\
\hline
  \end{tabular}
  \caption{Decomposition of $\mathbb{R}^4$ into semi-algebraic sets on which $\dim_{\mathbb{R}} T_{j^2 A_c} \mathcal{G}^2 \cdot j^2 A_c$ is constant.}
  \label{table:gcodim_c2}
 \end{center}
\end{table}
Each semi-algebraic set in Table~\ref{table:gcodim_c2} is
\begin{math}
C^\infty
\end{math}
manifold and its tangent space is contained in
\begin{math}
T_{j^2 A_c} \mathcal{G}^2 \cdot j^2 A_c
\end{math}. Therefore, Mather's lemma implies that each connected component of the semi-algebraic sets is contained in a single
\begin{math}
\mathcal{G}^2
\end{math}-orbit. In what follows, we pick up a representative for each connected component of the semi-algebraic sets in Table~\ref{table:gcodim_c2}.
\subsection{Class~2-1}
The semi-algebraic set defined by
\begin{math}
c_2 \left( - c_3^2 + 4 c_2 c_4 \right) \neq 0
\end{math}
consists of
\begin{math}
4
\end{math}
connected components and we can pick up representatives
\begin{math}
c = \left( 0, \pm 1, 0, \pm 1 \right)
\end{math}
from the $4$ connected components. The corresponding $2$-jets are
\begin{math}
\left(
\begin{matrix}
x_1 & 0 \\
0 & \pm x_2^2 \pm x_1^2
\end{matrix}
\right)
\end{math}.
\subsection{Class~2-2}
The semi-algebraic set defined by
\begin{math}
- c_3^2 + 4 c_2 c_4 = 0, c_2 \neq 0
\end{math}
or
\begin{math}
c_2 = 0, c_1 c_3 \neq 0
\end{math}
consists of
\begin{math}
6
\end{math}
connected components and we can pick up representatives
\begin{math}
c = \left( 0, \pm 1, 0, 0 \right)
\end{math}
and
\begin{math}
c = \left( \pm 1, 0, \pm 1, 0 \right)
\end{math}
from the $6$ connected components. The corresponding $2$-jets are
\begin{math}
\left(
\begin{matrix}
x_1 & 0 \\
0 & \pm x_2^2
\end{matrix}
\right)
\end{math}
and
\begin{math}
\left(
\begin{matrix}
x_1 & \pm x_2^2 \\
\pm x_2^2 & \pm x_1 x_2
\end{matrix}
\right)
\end{math}. In the latter case, if the sign in front of
\begin{math}
x_2^2
\end{math}
is negative, we can multiply
\begin{math}
\left(
\begin{matrix}
-1 & 0 \\
0 & 1
\end{matrix}
\right)
\end{math}
from the both side to get
\begin{math}
\left(
\begin{matrix}
x_1 & x_2^2 \\
x_2^2 & \pm x_1 x_2
\end{matrix}
\right)
\end{math}. By changing the sign of
\begin{math}
x_2
\end{math}
if necessary, we get
\begin{math}
\left(
\begin{matrix}
x_1 & x_2^2 \\
x_2^2 & x_1 x_2
\end{matrix}
\right)
\end{math}.
\subsection{Class~2-3}
In the same manner as above, we get the representatives
\begin{math}
\left(
\begin{matrix}
x_1 & 0 \\
0 & x_1 x_2
\end{matrix}
\right)
\end{math}
and
\begin{math}
\left(
\begin{matrix}
x_1 & x_2^2 \\
x_2^2 & x_1^2
\end{matrix}
\right)
\end{math}.
\subsection{Class~2-4}
In the same manner as above, we get the representatives
\begin{math}
\left(
\begin{matrix}
x_1 & 0 \\
0 & x_1^2
\end{matrix}
\right)
\end{math}
and
\begin{math}
\left(
\begin{matrix}
x_1 & x_2^2 \\
x_2^2 & 0
\end{matrix}
\right)
\end{math}.
\subsection{Class~2-5}
In this case, all the coefficients are zero and the $2$-jet is
\begin{math}
\left(
\begin{matrix}
x_1 & 0 \\
0 & 0
\end{matrix}
\right)
\end{math}.

The results are summarized in Table~\ref{table:c22jets}.
\begin{table}[h]
 \begin{center}
  \begin{tabular}{|l|l|l|} \hline
   class num & representative & $\mathcal{G}$-codim \\ \hline
   2-1-1 & $\left( \begin{matrix} x_1 & 0 \\ 0 & \pm x_1^2 \pm x_1^2 \end{matrix} \right)$ & $11$ \\
   2-2-1 & $\left( \begin{matrix} x_1 & 0 \\ 0 & \pm x_2^2 \end{matrix} \right)$ & $10$ \\
   2-2-2 & $\left( \begin{matrix} x_1 & x_2^2 \\ x_2^2 & x_1 x_2 \end{matrix} \right)$ & $10$ \\
   2-3-1 & $\left( \begin{matrix} x_1 & 0 \\ 0 & x_1 x_2 \end{matrix} \right)$ & $9$ \\
   2-3-2 & $\left( \begin{matrix} x_1 & x_2^2 \\ x_2^2 & 0 \end{matrix} \right)$ & $9$ \\
   2-4-1 & $\left( \begin{matrix} x_1 & 0 \\ 0 & \pm x_1^2 \end{matrix} \right)$ & $8$ \\
   2-4-2 & $\left( \begin{matrix} x_1 & x_2^2 \\ x_2^2 & 0 \end{matrix} \right)$ & $8$ \\
   2-5 & $\left( \begin{matrix} x_1 & 0 \\ 0 & 0 \end{matrix} \right)$ & $7$ \\
\hline
  \end{tabular}
  \caption{Representatives of $2$-jets in Class~2.}
  \label{table:c22jets}
 \end{center}
\end{table}
We continue classification of higher jets for each case and stops either a given $k$-jet is $k$-determined or the lower bound of the
\begin{math}
\mathcal{G}_e
\end{math}-codimension of map-germs having a given $k$-jet is larger than the prescribed value of
\begin{math}
\mathcal{G}_e
\end{math}-codimension ($8$ in the current situation). For the detail of the estimation, see \cite{Wall1981}.
\subsection{Key Theorem for Real Classification}
In this section, we would like to introduce a key theorem for classification in real. This theorem provides a useful criterion to check if two map germs of the same complex class, like,
\begin{equation}
A_{\pm} \left( x \right) = \left(
\begin{matrix}
x_1 & 0 \\
0 & x_1 x_2 \pm x_2^k
\end{matrix}
\right),
\end{equation}
belong to the same real class or not. Let
\begin{math}
A \colon \left( \mathbb{R}^r, 0 \right) \rightarrow \left( \textnormal{Sym}_n, O_n \right)
\end{math}
and
\begin{math}
S^{\left( n_1, n_2, n_3 \right)} \left( A \right)
\end{math}
be subset germ of
\begin{math}
\left( \mathbb{R}^r, 0 \right)
\end{math}
such that
\begin{math}
A \left( x \right)
\end{math}
has
\begin{math}
n_1
\end{math}
positive eigenvalues,
\begin{math}
n_2
\end{math}
zero eigenvalues, and
\begin{math}
n_3
\end{math}
negative eigenvalues for all
\begin{math}
x \in S^{\left( n_1, n_2, n_3 \right)} \left( A \right)
\end{math}. Since the action of
\begin{math}
\mathcal{H}
\end{math}
to
\begin{math}
A
\end{math}
preserves
\begin{math}
S^{\left( n_1, n_2, n_3 \right)} \left( A \right)
\end{math}
for
\begin{math}
n_1, n_2, n_3 \in \mathbb{N}
\end{math}, we get the following theorem.
\begin{thm}
If
\begin{math}
A, B \colon \left( \mathbb{R}^r, 0 \right) \rightarrow \left( \textnormal{Sym}_n, O_n \right)
\end{math}
are
\begin{math}
\mathcal{G}
\end{math}-equivalent, then 
\begin{math}
S^{\left( n_1, n_2, n_3 \right)} \left( A \right)
\end{math}
and
\begin{math}
S^{\left( n_1, n_2, n_3 \right)} \left( B \right)
\end{math}
are diffeomorphic for all
\begin{math}
n_1, n_2, n_3 \in \mathbb{N}
\end{math}
\end{thm}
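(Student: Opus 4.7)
The plan is to reduce the theorem to Sylvester's law of inertia applied pointwise, combined with the observation that composition with a diffeomorphism-germ $\Phi$ pulls back set-germs to diffeomorphic set-germs. By the definition of $\mathcal{G}$-equivalence, there exist a diffeomorphism-germ $\Phi \colon (\mathbb{R}^r,0)\rightarrow (\mathbb{R}^r,0)$ and a map-germ $X \in C^\infty(\mathbb{R}^r,\textnormal{GL}_n)$ with
\[
B(x) = X(x)^T \, A(\Phi(x)) \, X(x)
\]
for all $x$ in a neighborhood of $0$.

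First I would handle the action by $X$. For each fixed $x$, $X(x)$ is an invertible $n\times n$ matrix, so by Sylvester's law of inertia the congruent symmetric matrices $A(\Phi(x))$ and $X(x)^T A(\Phi(x)) X(x) = B(x)$ have the same number of positive, zero and negative eigenvalues. In other words, $B(x)$ and $A(\Phi(x))$ have identical signatures for every $x$. This is exactly the statement, alluded to in the excerpt, that the $X$-component of the $\mathcal{G}$-action (the subgroup $\mathcal{H}$) preserves $S^{(n_1,n_2,n_3)}$.

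Next I would handle the action by $\Phi$. Combining with the previous step, we get the pointwise equivalence
\[
x \in S^{(n_1,n_2,n_3)}(B) \iff B(x)\ \text{has signature}\ (n_1,n_2,n_3) \iff \Phi(x) \in S^{(n_1,n_2,n_3)}(A),
\]
which yields the set-germ identity $S^{(n_1,n_2,n_3)}(B) = \Phi^{-1}\bigl(S^{(n_1,n_2,n_3)}(A)\bigr)$. Since $\Phi$ is a diffeomorphism-germ, its restriction to this preimage is a diffeomorphism-germ onto $S^{(n_1,n_2,n_3)}(A)$, which is exactly the required conclusion.

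I do not anticipate any genuine obstacle: the whole argument is essentially a direct application of Sylvester's law of inertia together with the functorial behaviour of preimages under $\Phi$. The only point worth flagging is that $S^{(n_1,n_2,n_3)}(A)$ need not be a manifold (it is merely a set-germ), so ``diffeomorphic'' here is understood in the set-germ sense as equality of germs up to the ambient diffeomorphism $\Phi$; the proof does not require any regularity of these subsets beyond what the pullback by $\Phi$ provides automatically.
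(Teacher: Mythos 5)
Your proof is correct and follows essentially the same route as the paper: the paper simply notes that the congruence part of the $\mathcal{G}$-action preserves the inertia strata (Sylvester's law of inertia, which you make explicit) and that the diffeomorphism-germ $\Phi$ then carries $S^{(n_1,n_2,n_3)}(B)=\Phi^{-1}\bigl(S^{(n_1,n_2,n_3)}(A)\bigr)$ diffeomorphically onto $S^{(n_1,n_2,n_3)}(A)$. Your write-up is in fact more detailed than the paper's one-line justification.
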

Let us demonstrate the theorem to check if the two map germs
\begin{math}
A_{\pm}
\end{math}
are in the same
\begin{math}
\mathcal{G}
\end{math}-equivalence class in real or not. In Fig.~\ref{fig:pos_neq}, we plot
\begin{math}
S^{\left( 2, 0, 0 \right)} \left( A_{\pm} \right)
\end{math}
for
\begin{math}
k = 2
\end{math}
in
\begin{math}
\mathbb{R}^2
\end{math}
indicated in blue.
\begin{figure}
  \centering
  \includegraphics[width=5cm]{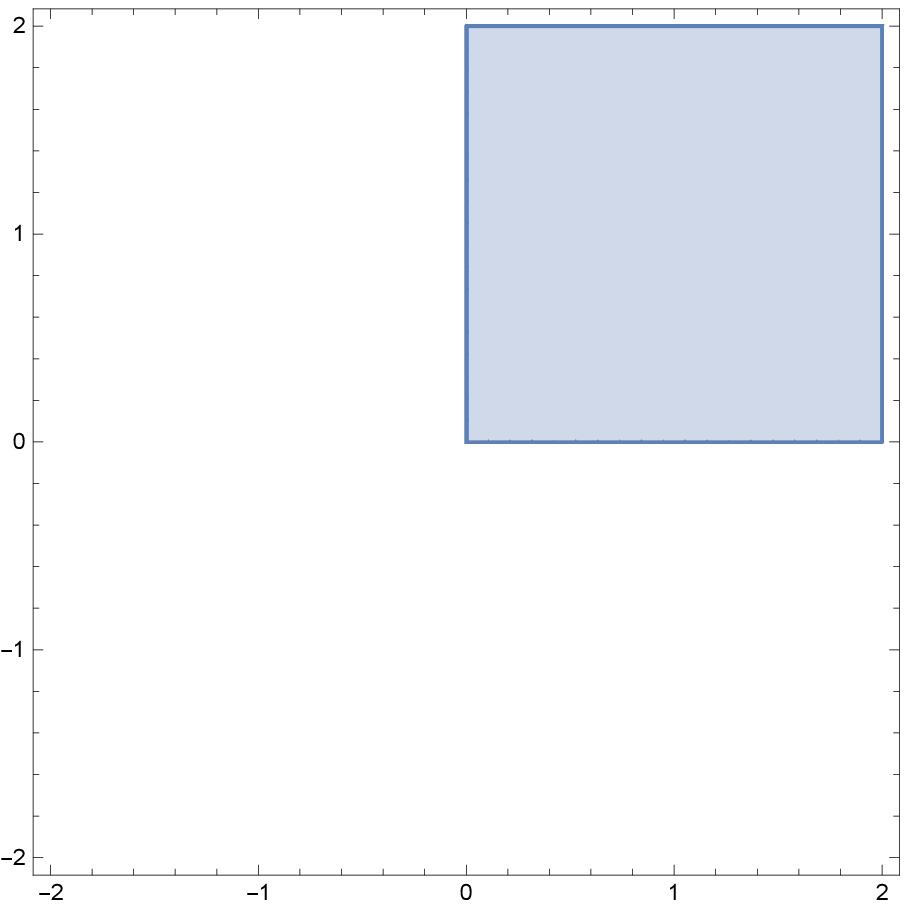}
  \includegraphics[width=5cm]{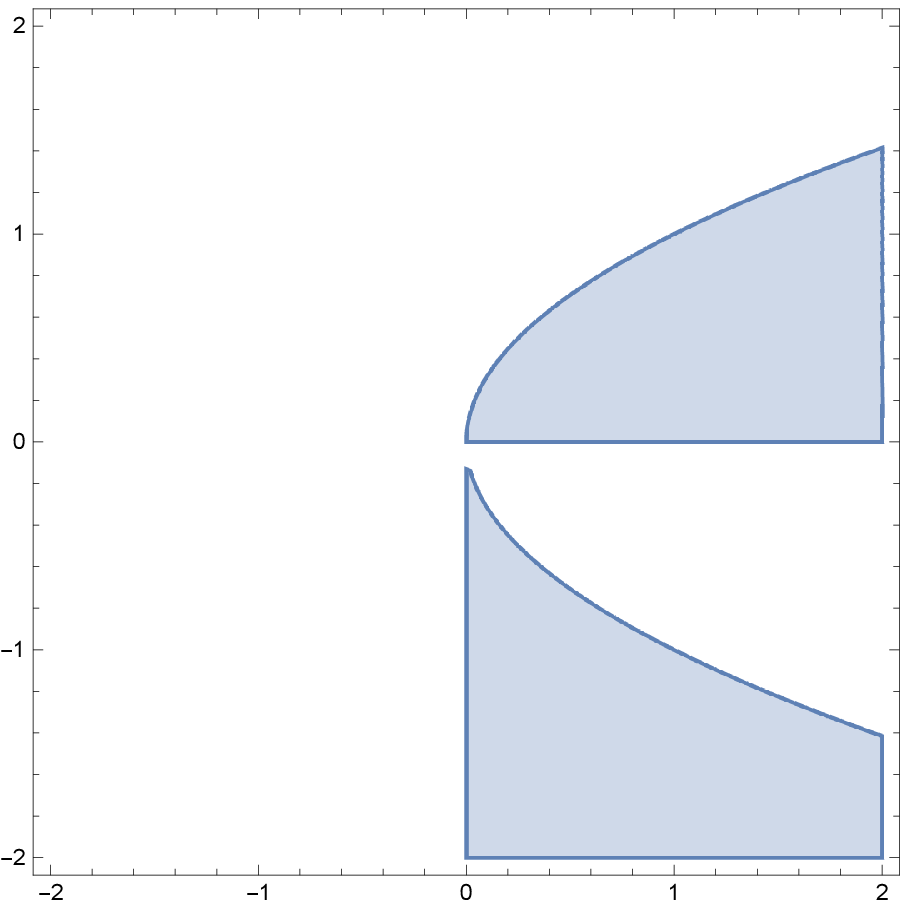}
  \caption{$S^{\left( 2, 0, 0 \right)} \left( A_+ \right)$ (left) and $S^{\left( 2, 0, 0 \right)} \left( A_- \right)$ (right) in $\mathbb{R}^2$ where the horizontal axis is $x_1$ and the vertical axis is $x_2$.}
  \label{fig:pos_neq}
\end{figure}
Since
\begin{math}
S^{\left( 2, 0, 0 \right)} \left( A_+ \right)
\end{math}
consists of single connected region whereas
\begin{math}
S^{\left( 2, 0, 0 \right)} \left( A_- \right)
\end{math}
consists of two separated regions, they are not diffeomorphic. Therefore, we can conclude that the two map germs
\begin{math}
A_+
\end{math}
and
\begin{math}
A_-
\end{math}
are not
\begin{math}
\mathcal{G}
\end{math}-equivalent in real.

\section{Unimodular normal forms of map-germs $\left( \mathbb{R}^r, 0 \right) \rightarrow \left( \textnormal{Sym}_n, O_n \right)$  for $r = 2, n = 2, 3$} \label{sec:uni_normal_form}
Unimodular normal forms for families of symmetric matrices, which belong to $ C^{\infty}\left( \mathbb{R}^2, \textnormal{Sym}_2 \right)$,  are presented in Table \ref{table:3}. It is easy to check that all real normal forms in Table \ref{table:2} are symmetrically quasi-homogeneous with weights $\lambda_1$, $\lambda_2$ presented in Table \ref{table:3}.  By Proposition \ref{split} we show that only real singularities 8-10 in Table \ref{table:2} spilt into two different unimodular singularities.

\begin{table}[h]
 \begin{center}
  \begin{tabular}{|l|l|l|l|l|l|} \hline
   $\#$ & normal form & range & $\mathcal{G}_e$-cod &  $\lambda_1$ & $\lambda_2$ \\ \hline
   1 & $\left( \begin{matrix} x_1 & 0 \\ 0 & x_2 \end{matrix} \right)$ & & $1$ & $1$ & $1$\\
   2 & $\left( \begin{matrix} x_1 & x_2 \\ x_2 & -x_1 \end{matrix} \right)$ & & $1$ & $1$ &  $1$\\
   3 & $\left( \begin{matrix} x_1 & x_2 \\ x_2 & \pm x_1^\ell \end{matrix} \right)$ & $\ell \ge 2$ & $\ell$ & $2$ & $\ell+1$ \\
   4 & $\left( \begin{matrix} x_1 & x_2^\ell \\ x_2^\ell & x_1 \end{matrix} \right)$ & $\ell \ge 2$& $2\ell-1$ & $\ell$ & $1$ \\
   5 & $\left( \begin{matrix} \pm x_2^{\ell_1} & x_1 \\ x_1 & \pm x_2^{\ell_2} \end{matrix} \right)$ & $\ell_1 \ge \ell_2 \ge 2$& $\ell_1 + \ell_2 -1$ & $\ell_1+\ell_2$ & $2$ \\
   6 & $\left( \begin{matrix} x_1 & x_2^2 \\ x_2^2 & \pm x_1^2 \end{matrix} \right)$ & & $6$ & $4$ & $3$ \\
   7 & $\left( \begin{matrix} x_1 & 0 \\ 0 & \pm x_2^2 \pm x_1^\ell \end{matrix} \right)$ & $\ell \ge 2$ & $\ell+2$ & $2$ & $\ell$ \\
   $8^{\pm}$ & $\left( \begin{matrix} x_1 & 0 \\ 0 &\pm x_1 x_2 \pm x_1^\ell \end{matrix} \right)$ & $\ell \ge 3$ & $2 \ell$ & $1$ & $\ell - 1$ \\
   $9^{+}$ & $\left( \begin{matrix} x_1 & x_2^\ell \\ x_2^\ell & x_1 x_2 \end{matrix} \right)$ & $\ell \ge 2$ & $2 \ell + 1$ & $2\ell-1$ & $2$ \\
   $9^{-}$ & $\left( \begin{matrix} -x_1 & x_2^\ell \\ x_2^\ell & -x_1 x_2 \end{matrix} \right)$ & $\ell \ge 2$ & $2 \ell + 1$ & $2\ell-1$ & $2$ \\
   $10^{\pm}$ & $\left( \begin{matrix} x_1 & 0 \\ 0 & \pm x_1^2 \pm x_2^3 \end{matrix} \right)$ & & $7$ & $3$ & $2$ \\
\hline
  \end{tabular}
  \caption{Unimodular singularities $(\mathbb R^2,0)\rightarrow \textnormal{Sym}_2$. $\lambda_1, \lambda_2$ are weigths  of symmetrical quasi-homogeneity. }
  \label{table:3}
 \end{center}
\end{table}

 Let $\Omega=dx_1\wedge dx_2$ be a volume form-germ on $\mathbb R^2$. In Table \ref{table:orient-revers} we present orientation-reversing diffeomorphism-germs  $\Phi:(\mathbb R^2,0)\rightarrow (\mathbb R^2,0)$ that  belong to $\mathcal D_A$ for the following  $A:\left( \mathbb{R}^2, 0 \right) \rightarrow \left( \textnormal{Sym}_2, O_2 \right)$  i.e. $\Phi^{\ast}\Omega=-\Omega$ and $A=X^T(A\circ \Phi)X$ for the following $X\in C^{\infty}(\mathbb R^2,\textnormal{GL}_2)$.

\begin{table}[h]
 \begin{center}
  \begin{tabular}{|l|l|l|l|} \hline
   $\#$ &  $A(x_1,x_2)$ & $\Phi(x_1,x_2)$ & $X(x_1,x_2)$  \\ \hline
   1 & $\left( \begin{matrix} x_1 & 0 \\ 0 & x_2 \end{matrix} \right)$ &$(x_2,x_1)$ & $\left( \begin{matrix} 0 & 1 \\ 1 & 0 \end{matrix} \right)$ \\
   2 & $\left( \begin{matrix} x_1 & x_2 \\ x_2 & -x_1 \end{matrix} \right)$ & $(-x_1,x_2)$ & $\left( \begin{matrix} 0 & 1 \\ 1 & 0 \end{matrix} \right)$ \\
   3 & $\left( \begin{matrix} x_1 & x_2 \\ x_2 & \pm x_1^\ell \end{matrix} \right)$ & $(x_1,-x_2)$ & $\left( \begin{matrix} -1 & 0 \\ 0 & 1 \end{matrix} \right)$  \\
   4 & $\left( \begin{matrix} x_1 & x_2^\ell \\ x_2^\ell & x_1 \end{matrix} \right)$ & $(x_1,-x_2)$& $\left( \begin{matrix} (-1)^{\ell} & 0 \\ 0 & 1 \end{matrix} \right)$  \\
   5 & $\left( \begin{matrix} \pm x_2^{\ell_1} & x_1 \\ x_1 & \pm x_2^{\ell_2} \end{matrix} \right)$ & $(-x_1,x_2)$& $\left( \begin{matrix} -1 & 0 \\ 0 & 1 \end{matrix} \right)$  \\
   6 & $\left( \begin{matrix} x_1 & x_2^2 \\ x_2^2 & \pm x_1^2 \end{matrix} \right)$ &$(x_1,-x_2)$ & $\left( \begin{matrix} 1 & 0 \\ 0 & 1 \end{matrix} \right)$   \\
   7 & $\left( \begin{matrix} x_1 & 0 \\ 0 & \pm x_2^2 \pm x_1^\ell \end{matrix} \right)$ & $(x_1,-x_2$ & $\left( \begin{matrix} 1 & 0 \\ 0 & 1 \end{matrix} \right)$  \\
\hline
  \end{tabular}
  \caption{Orientation-reversing diffeomorphism-germs $\Phi \in \mathcal D_A$ for$A:\left( \mathbb{R}^2, 0 \right) \rightarrow \left( \textnormal{Sym}_2, O_2 \right)$.}
  \label{table:orient-revers}
 \end{center}
\end{table}

Now we show that if $\Phi \in \mathcal D_A$ then $\Phi$ preserves the orientation of $\mathbb R^2$ for $A$ in rows 8-10 in Table \ref{table:2}.
Let $A= \left( \begin{matrix} x_1 & 0 \\ 0 & x_1 x_2 \pm x_1^\ell \end{matrix} \right)$ and  $\Phi=(\Phi_1,\Phi_2):(\mathbb R^2,0)\rightarrow (\mathbb R^2,0)$ be a diffeomorphism-germ such that
\begin{equation}\label{XATA}
B:=A\circ \Phi-X^T AX=0
\end{equation}
 for some  $X\in C^{\infty}(\mathbb R^2,\textnormal{GL}_2)$. It is easy to see that  $\frac{\partial B_{11}}{\partial x_2}|_0=\frac{\partial \Phi_1}{\partial x_2}|_0=0$ and $\frac{\partial B_{22}}{\partial x_1}|_0=-(X_{1,2}(0))^2=0$. It implies that  $\frac{\partial^2B_{22}}{\partial x_1 \partial x_2}|_0=\frac{\partial \Phi_1}{\partial x_1}|_0\frac{\partial \Phi_2}{\partial x_2}|_0-(X_{22}(0))^2=0$. Thus $\det d\Phi|_0$ is positive.  In the same way we prove that any $\Phi\in \mathcal D_A$ for $A=\left( \begin{matrix} x_1 & x_2^\ell \\ x_2^\ell & x_1 x_2 \end{matrix} \right)$ preserves the orientation. For $A=\left( \begin{matrix} x_1 & 0 \\ 0 & \pm x_1^2 + x_2^3 \end{matrix} \right)$ we proceed in the similar way. By $\frac{\partial B_{11}}{\partial x_1}|_0=0$, $\frac{\partial B_{11}}{\partial x_2}|_0=0$ and $\frac{\partial^3 B_{22}}{\partial x_2^3}|_0=0$ we obtain that  $\det d\Phi|_0$ is positive.

Unimodular normal forms for families of symmetric matrices, which belong to  $ C^{\infty}\left( \mathbb{R}^2, \textnormal{Sym}_3, O_3 \right)$, are presented in Table \ref{table:Uni-n2m3}. It is easy to check that all real normal forms in Table \ref{table:n2m3} are symmetrically quasi-homogeneous with weights $\lambda_1$, $\lambda_2$ presented in Table \ref{table:Uni-n2m3}.  By Proposition \ref{split} we show that only real singularities 5 and 7-10 in Table \ref{table:n2m3} split into two different unimodular singularities. Let $\Omega=dx_1\wedge dx_2$ be a volume form-germ on $\mathbb R^2$.  In Table \ref{table:orient-revers2} we present orientation-reversing diffeomorphism-germs  $\Phi:(\mathbb R^2,0)\rightarrow (\mathbb R^2,0)$ that  belong to $\mathcal D_A$ for the following  $A:\left( \mathbb{R}^2, 0 \right) \rightarrow \left( \textnormal{Sym}_3, O_3 \right)$  i.e. $\Phi^{\ast}\Omega=-\Omega$ and $A=X^T(A\circ \Phi)X$ for the following $X\in C^{\infty}(\mathbb R^2,\textnormal{GL}_3)$.

\begin{table}[h]
 \begin{center}
  \begin{tabular}{|l|l|l|l|} \hline
   $\#$ &  $A(x_1,x_2)$ & $\Phi(x_1,x_2)$ & $X(x_1,x_2)$  \\ \hline
   1 & $\left( \begin{matrix} x_1 & 0 & 0 \\ 0 & x_2 & 0 \\ 0 & 0 & x_1 - x_2 \end{matrix} \right)$ &$(x_1,x_1-x_2)$ & $\left( \begin{matrix} 1 & 0 & 0 \\ 0 & 0 & 1 \\ 0 &1 & 0 \end{matrix} \right)$ \\
   2 & $\left( \begin{matrix} x_1 & 0 & 0 \\ 0 & x_2 & 0 \\ 0 & 0 & \pm \left( x_1 + x_2 \right) \end{matrix} \right)$ & $(x_2,x_1)$ & $\left( \begin{matrix} 0 & 1 & 0 \\ 1 & 0 & 0 \\ 0 & 0 & 1 \end{matrix} \right)$ \\
   3 & $\left( \begin{matrix} \pm x_1 & 0 & 0 \\ 0 & x_1 & x_2 \\ 0 & x_2 & -x_1 \end{matrix} \right)$ & $(x_1,-x_2)$ & $\left( \begin{matrix} 1 & 0 & 0 \\ 0 & -1 & 0 \\ 0 &0 & 1 \end{matrix} \right)$  \\
   4 & $\left( \begin{matrix} \pm x_2^{\ell_1} & x_1 & 0 \\ x_1 & \pm x_2^{\ell_2} & 0 \\ 0 & 0 & x_2 \end{matrix} \right) $ & $(-x_1,x_2)$& $\left( \begin{matrix} 1 & 0 & 0 \\ 0 & -1 & 0 \\ 0 &0 & 1 \end{matrix} \right)$  \\
   6 & $\left( \begin{matrix} 0 & x_2 & x_1 \\ x_2 & x_1 & 0 \\ x_1 & 0 & \pm x_2^2 \end{matrix} \right)$ & $(x_1,-x_2)$& $\left( \begin{matrix} 1 & 0 & 0 \\ 0 & -1 & 0 \\ 0 &0 & 1 \end{matrix} \right)$  \\

\hline
  \end{tabular}
  \caption{Orientation-reversing diffeomorphism-germs $\Phi \in \mathcal D_A$ for $A:\left( \mathbb{R}^2, 0 \right) \rightarrow \left( \textnormal{Sym}_3, O_3 \right)$. }
  \label{table:orient-revers2}
 \end{center}
\end{table}

Now we show that  if $\Phi \in \mathcal D_A$ then $\Phi$ preserves the orientation of $\mathbb R^2$ for  $A=\left( \begin{matrix} x_1 & x_2^\ell & 0 \\ x_2^\ell & x_1 & 0 \\ 0 & 0 & x_2 \end{matrix} \right)$ (see row 5 in Table \ref{table:n2m3}).
Let  $\Phi=(\Phi_1,\Phi_2):(\mathbb R^2,0)\rightarrow (\mathbb R^2,0)$ be a diffeomorphism-germ such that
\begin{equation}\label{XATA3}
C:=A\circ \Phi-X^T AX=0
\end{equation}
 for some  $X\in C^{\infty}(\mathbb R^2,\textnormal{GL}_2)$.  From (\ref{XATA3}) it is easy to see that
\begin{equation}\label{eq7}
\frac{\partial C_{12}}{\partial x_2}|_0=-X_{31}(0)X_{32}(0)=0,
\end{equation}
\begin{equation}\label{eq6}
\frac{\partial C_{13}}{\partial x_2}|_0=-X_{31}(0)X_{33}(0)=0,
\end{equation}
\begin{equation}\label{eq5}
\frac{\partial C_{23}}{\partial x_2}|_0=-X_{32}(0)X_{33}(0)=0,
\end{equation}
\begin{equation}\label{eq4}
\frac{\partial C_{11}}{\partial x_2}|_0=\frac{\partial \Phi_1}{\partial x_2}|_0-(X_{31}(0))^2=0,
\end{equation}
\begin{equation}\label{eq3}
\frac{\partial C_{33}}{\partial x_2}|_0=\frac{\partial \Phi_2}{\partial x_2}|_0-(X_{33}(0))^2=0,
\end{equation}
\begin{equation}\label{eq2}
\frac{\partial C_{22}}{\partial x_2}|_0=\frac{\partial \Phi_1}{\partial x_2}|_0-(X_{32}(0))^2=0,
\end{equation}
\begin{equation}\label{eq1}
\frac{\partial C_{11}}{\partial x_1}|_0=\frac{\partial \Phi_1}{\partial x_1}|_0-(X_{11}(0))^2-(X_{21}(0))^2=0.
\end{equation}
(\ref{eq7})-(\ref{eq5}) imply that $X_{31}(0)=X_{32}(0)=0$ or $X_{31}(0)=X_{33}(0)=0$ or $X_{32}(0)=X_{33}(0)=0$.
But if $X_{31}(0)=X_{33}(0)=0$ or $X_{32}(0)=X_{33}(0)=0$ then by (\ref{eq4})-(\ref{eq2}) we obtain that $\det d\Phi|_0=0$, which is impossible. If  $X_{31}(0)=X_{32}(0)=0$ then by (\ref{eq4})-(\ref{eq1}) we have that $\det d\Phi|_0=((X_{11}(0))^2+(X_{21}(0))^2)(X_{33}(0))^2$. Thus $\det d\Phi|_0$ is positive.  In the similar way one can  prove that any $\Phi\in \mathcal D_A$  preserves the orientation for $A$ presented in rows 7-10 in Table \ref{table:n2m3}.
\begin{table}[h]
 \begin{center}
  \begin{tabular}{|l|l|l|l|l|l|} \hline
   $\#$ & representative & range & $\mathcal{G}_e$-cod & $\lambda_1$ & $\lambda_2$ \\ \hline
   1 & $\left( \begin{matrix} x_1 & 0 & 0 \\ 0 & x_2 & 0 \\ 0 & 0 & x_1 - x_2 \end{matrix} \right)$ & & $4$ & $1$ & $1$ \\
   2 & $\left( \begin{matrix} x_1 & 0 & 0 \\ 0 & x_2 & 0 \\ 0 & 0 & \pm \left( x_1 + x_2 \right) \end{matrix} \right)$ & & $4$ & $1$ & $1$ \\
   3 & $\left( \begin{matrix} \pm x_1 & 0 & 0 \\ 0 & x_1 & x_2 \\ 0 & x_2 & -x_1 \end{matrix} \right)$ & & $4$ & $1$ & $1$ \\
   4 & $\left( \begin{matrix} \pm x_2^{\ell_1} & x_1 & 0 \\ x_1 & \pm x_2^{\ell_2} & 0 \\ 0 & 0 & x_2 \end{matrix} \right)$ & $\begin{matrix} \ell_1 \ge \ell_2 \ge 1 \\ \ell_1 \ge 2 \end{matrix}$ & $\ell_1 + \ell_2 + 2$ & $\ell_1 + \ell_2$ & $2$ \\
  $5^{\pm}$ & $\left( \begin{matrix} x_1 & x_2^\ell & 0 \\ x_2^\ell & x_1 & 0 \\ 0 & 0 &\pm x_2 \end{matrix} \right)$ & $\ell \ge 2$& $2\ell+2$ & $\ell$ & $1$ \\
   6 & $\left( \begin{matrix} 0 & x_2 & x_1 \\ x_2 & x_1 & 0 \\ x_1 & 0 & \pm x_2^2 \end{matrix} \right)$ & & $6$ & $4$ & $3$ \\
   $7^{\pm}$ & $\left( \begin{matrix} 0 & x_2 & x_1 \\ x_2 &\pm  x_1 & 0 \\ x_1 & 0 & x_1 x_2 \end{matrix} \right)$ & & $7$ & $3$ & $2$ \\
   $8^{\pm}$ & $\left( \begin{matrix} \pm x_2 & x_1 & 0 \\ x_1 & \pm x_2^2 & 0 \\ 0 & 0 & x_1 \end{matrix} \right)$ & & $7$ & $3$ & $2$ \\
   $9^{\pm}$ & $\left( \begin{matrix} 0 & x_2 & x_1 \\ x_2 &\pm x_1 & 0 \\ x_1 & 0 & x_2^3 \end{matrix} \right)$ & & $8$ & $5$ & $3$ \\

   $10^{\pm}$ & $\left( \begin{matrix} \pm x_2 & x_1 & 0 \\ x_1 & 0 & x_2^2 \\ 0 & x_2^2 & x_1 \end{matrix} \right)$ & & $8$ & $5$ & $3$ \\
\hline
  \end{tabular}
  \caption{Unimodular singularities $(\mathbb R^2,0)\rightarrow \textnormal{Sym}_3$. $\lambda_1$, $\lambda_2$ are weigths  of symmetrical quasi-homogeneity.}
  \label{table:Uni-n2m3}
 \end{center}
\end{table}

\section{Examples for non symmetrically quasi-homogeneous map-germs} \label{sec:example}
In this section, we provide some examples of map-germs
\begin{math}
\left( \mathbb{R}^r, 0 \right) \rightarrow \left( \textnormal{Sym}_n, O_n \right)
\end{math}
that are not symmetrically quasi-homogeneous. A natural candidate for such a map-germ is a non-simple map-germ at the boundary of the set of simple map-germs in Table~\ref{table:2} and Table~\ref{table:n2m3}. The most generic non-simple map-germs appearing in Class~2-4-1 in Table~\ref{table:c22jets} are bi-modal map-germs of stratum codimension $8$. It is too complicated to show the whole parameter families and thus we pick up some of them to give illustrative examples.
\begin{equation}
A_{\alpha,\beta} \left( x \right) = \left(
\begin{matrix}
x_1 & x_2^3 \\
x_2^3 & \delta x_1^2 + \alpha x_1 x_2^2 + \beta x_2^4
\end{matrix}
\right)
\end{equation}
where
\begin{math}
\delta = \pm 1
\end{math}
and
\begin{math}
\alpha, \beta \in \mathbb{R}
\end{math}
are moduli parameters satisfying
\begin{math}
\alpha^3 \beta^2 + 4 \alpha^4 - 4 \delta \alpha \beta^3 - 18 \delta \alpha^2 \beta - 27 \alpha \neq 0
\end{math},
\begin{equation}
A_{\beta} \left( x \right) = \left(
\begin{matrix}
x_1 & 0 \\
0 & \delta x_1^2 \pm 2 x_1 x_2^2 + \beta x_2^4
\end{matrix}
\right)
\end{equation}
where
\begin{math}
\delta = \pm 1
\end{math}
and
\begin{math}
\beta \in \mathbb{R}
\end{math}
is a moduli parameter satisfying
\begin{math}
\beta \neq \delta
\end{math},
and
\begin{equation}
A_{2,1} \left( x \right) = \left(
\begin{matrix}
x_1 & 0 \\
0 & d_1 \left( x_1 + d_2 x_2^2 \right)^2 + x_2^5
\end{matrix}
\right),
\end{equation}
where
\begin{math}
d_i = \pm 1
\end{math}. They have the modality
\begin{math}
2
\end{math},
\begin{math}
\mathcal{G}_e
\end{math}-codimension
\begin{math}
10
\end{math}
and is adjacent to Class $\# 10$ in Table~\ref{table:2}.

\begin{math}
A_{\alpha,\beta}
\end{math}
and
\begin{math}
A_{\beta}
\end{math}
are non-simple but symmetrically quasi-homogeneous with weights
\begin{math}
\lambda_1 = 2
\end{math}
and
\begin{math}
\lambda_2 = 1
\end{math}.
\begin{math}
A_{2,1}
\end{math}
is not symmetrically quasi-homogeneous. This can be shown as follows.  The Lie algebra
\begin{math}
L \mathcal{D}_{A_{2,1}}
\end{math}
of the isotropy group
\begin{math}
\mathcal{D}_{A_{2,1}}
\end{math}
has the following form
\begin{multline}
L \mathcal{D}_{A_{2,1}} = \\
\langle \left( 16 d_1 x_1 x_2 + 25 x_1 x_2^2 \right) \frac{\partial}{\partial x_1} + \left( 2d_1 d_2 x_1 + 10 d_1 x_2^2 + 10 x_2^3 \right) \frac{\partial}{\partial x_2}, \\
\left( 5 x_1^2 + d_2 x_1 x_2^2 \right) \frac{\partial}{\partial x_1} + 2 x_1 x_2 \frac{\partial}{\partial x_2}
\rangle_{C^\infty \left( \mathbb{R}^r, \mathbb{R} \right)}
\end{multline}
modulo
\begin{math}
\langle x_1, x_2 \rangle^\infty
\end{math}. Note that both of the generators have nilpotent linear parts. Since the eigenvalues of a linearized vector field are invariant under the action of
\begin{math}
\mathcal{D}
\end{math},
\begin{math}
L \mathcal{D}_{A_{2,1}}
\end{math}
can not contain a generalized Euler vector field with a positive total weight. This proves that
\begin{math}
A_{2,1}
\end{math}
is not symmetrically quasi-homogeneous.

Note that the condition that a map-germ being symmetrically quasi-homogeneous is stronger than one that each component of the map-germ is quasi-homogeneous. One such example is
\begin{equation}
A_h = \left(
\begin{matrix}
x_1^3 & x_1^2 x_2 + x_2^3 \\
x_1^2 x_2 + x_2^3 & x_2^5
\end{matrix}
\right),
\end{equation}
whose components are homogeneous polynomial with respect to
\begin{math}
x_1
\end{math}
and
\begin{math}
x_2
\end{math}. Its Lie algebra
\begin{math}
L \mathcal{D}_{A_h}
\end{math}
of the isotropy group
\begin{math}
\mathcal{D}_{A_h}
\end{math}
has the following form
\begin{multline}
L \mathcal{D}_{A_h} = \langle \left( x_1^2 + 3 x_1 x_2^3 \right) \frac{\partial}{\partial x_1} + \left( 3 x_1 x_2 + x_2^4 \right) \frac{\partial}{\partial x_2}, \\
\left( x_1 x_2^2 + 9 x_1^2 x_2^3 \right) \frac{\partial}{\partial x_1} + \left( 8 x_1^2 x_2 + 3 x_2^3 + 3 x_1 x_2^4 \right) \frac{\partial}{\partial x_2}, \\
\left( 117 x_1^3 x_2 - 7 x_1^4 x_2^2 + 192 x_1^2 x_2^4 + 60 x_2^6 + 56 x_1 x_2^7 \right) \frac{\partial}{\partial x_1} \\
+ \left( 36 x_1^4 + 171 x_1^2 x_2^2 - 21 x_1^3 x_2^3 + 156 x_1 x_2^5 \right) \frac{\partial}{\partial x_2}, \\
\left( 45 x_1 x_2^4 - 95 x_1^4 x_2^3 + 114 x_1^2 x_2^5 + 120 x_2^7 \right) \frac{\partial}{\partial x_1} \\
+ \left( 135 x_2^5 - 69 x_1^3 x_2^4 + 222 x_1 x_2^6 + 112 x_1 x_2^8 \right) \frac{\partial}{\partial x_2} \rangle_{C^\infty \left( \mathbb{R}^r, \mathbb{R} \right)}
\end{multline}
modulo
\begin{math}
\langle x_1, x_2 \rangle^\infty
\end{math}. Note that both of the generators have nilpotent linear parts and thus
\begin{math}
A_h
\end{math}
is not symmetrically quasi-homogeneous.

For a non symmetrically quasi-homogeneous map-germ, we can get a unimodular classification as follows. To demonstrate it, let us consider
\begin{math}
A_{2,1}
\end{math}(\begin{math} d_1 = d_2 = 1 \end{math})
as an example. First, let us classify
\begin{math}
\mbox{Vol}
\end{math}
by the action of
\begin{math}
\mathcal{D}_{A_{2,1}}
\end{math}. If $\Omega$ is a volume form-germ and $V$ is a vector field-germ then $ d \left(  V \rfloor\Omega \right) = \left( \mathrm{div} V \right) \Omega$. Thus the set of infinitesimal actions of
\begin{math}
\mathcal{D}_{A_{2,1}}
\end{math}
to a volume form-germ
\begin{math}
\Omega
\end{math}
is
\begin{equation}
T \mathcal{D}_{A_{2,1}} \left( \Omega \right) = \left\{ \left( \mathrm{div} V \right) \Omega \left| V \in L \mathcal{D}_{A_{0,1}} \right. \right\}.
\end{equation}
This set is a vector subspace of
\begin{math}
\Lambda^2
\end{math}
over
\begin{math}
\mathbb{R}
\end{math}.
Its quotient vector space becomes
\begin{equation}
\cfrac{\Lambda^2}{T \mathcal{D}_{A_{2,1}} \left( \Omega \right)} = \langle 1 \rangle_{\mathbb{R}}
\end{equation}
modulo
\begin{math}
\langle x_1, x_2 \rangle^\infty
\end{math}. This can be shown as follows. By fixing the standard volume form
\begin{math}
dx_1 \wedge dx_2
\end{math}, we get the isomorphism
\begin{math}
\Lambda^r \cong C^\infty \left( \mathbb{R}^2, \mathbb{R} \right)
\end{math}
by identifying
\begin{math}
f \left( x \right) dx_1 \wedge dx_2
\end{math}
with
\begin{math}
f \left( x \right)
\end{math}. In what follows, we identify the two through the isomorphism. First note that
\begin{multline}
T \mathcal{D}_{A_{2,1}} \left( \Omega \right) =   \\
\left\{\left( 16 x_1 x_2 + 25 x_1 x_2^2 \right) \frac{\partial f_1}{\partial x_1} +  2\left( x_1 + 5 x_2^2 + 5 x_2^3 \right) \frac{\partial f_1}{\partial x_2} + \left( 36 x_2 + 55 x_2^2 \right) f_1,\right. \\
\left. \left( 5 x_1^2 + x_1 x_2^2 \right) \frac{\partial f_2}{\partial x_1} + \left( 2 x_1 x_2 \right) \frac{\partial f_2}{\partial x_2} + \left( 12 x_1 + x_2^2 \right) f_2 \left| f_1, f_2 \in C^\infty \left( \mathbb{R}^2, \mathbb{R} \right) \right. \right\} \label{eq:gen}
\end{multline}
modulo
\begin{math}
\langle x_1, x_2 \rangle^\infty
\end{math}. In what follows, we show
\begin{math}
\cfrac{\Lambda^2}{T \mathcal{D}_{A_{2,1}} \left( \Omega \right)} = \langle 1 \rangle_{\mathbb{R}}
\end{math}
modulo
\begin{math}
\langle x_1, x_2 \rangle^\ell
\end{math}
inductively for
\begin{math}
\ell \in \mathbb{N}
\end{math}. If we set
\begin{math}
f_1 = c_1
\end{math}
and
\begin{math}
f_2 = c_2
\end{math}
in Eq.~\eqref{eq:gen} where
\begin{math}
c_i \in \mathbb{R}
\end{math}, we get
\begin{math}
36 c_1 x_2, 12 c_2 x_1 \in T \mathcal{D}_{A_{2,1}} \left( \Omega \right)
\end{math}
modulo
\begin{math}
\langle x_1, x_2 \rangle^2
\end{math}.
This implies that
\begin{math}
\cfrac{\Lambda^2}{T \mathcal{D}_{A_{2,1}} \left( \Omega \right)} = \langle 1 \rangle_{\mathbb{R}}
\end{math}
modulo
\begin{math}
\langle x_1, x_2 \rangle^2
\end{math}. Let us assume
\begin{math}
\cfrac{\Lambda^2}{T \mathcal{D}_{A_{2,1}} \left( \Omega \right)} = \langle 1 \rangle_{\mathbb{R}}
\end{math}
modulo
\begin{math}
\langle x_1, x_2 \rangle^{\ell}
\end{math}
for
\begin{math}
\ell \ge 2
\end{math}
and we show this holds modulo
\begin{math}
\langle x_1, x_2 \rangle^{\ell+1}
\end{math}. To show that, note that
\begin{multline}
\left( x_2^2 \frac{\partial^2}{\partial x_2^2} + \frac{144}{7} x_1 x_2^2 \frac{\partial^2}{\partial x_1^2} - \frac{121}{14} x_2^3 \frac{\partial^2}{\partial x_1 \partial x_2} + \frac{10}{7} x_2^3 \frac{\partial^2}{\partial x_2^2} \right. \\
- \frac{225}{7} x_1 x_2^3 \frac{\partial^2}{\partial x_1^2} - \frac{87}{7} x_2^4 \frac{\partial^2}{\partial x_1 \partial x_2} - \frac{1}{7} x_2^4 \frac{\partial^2}{\partial x_2^2} + \frac{216}{35} x_2 \frac{\partial}{\partial x_2} \\
\left. - \frac{324}{7} x_2^2 \frac{\partial}{\partial x_1} + \frac{66}{7} x_2^2 \frac{\partial}{\partial x_2} - \frac{495}{7} x_2^3 \frac{\partial}{\partial x_1} \right) f_3 \in T \mathcal{D}_{A_{2,1}} \left( \Omega \right)
\end{multline}
holds for any
\begin{math}
f_3 \in T \mathcal{D}_{A_{2,1}} \left( \Omega \right)
\end{math}. By setting
\begin{math}
f_3 \left( x \right) = x_2^\ell / \left( \ell \left( \ell-1 \right) + \frac{216}{35} \ell \right)
\end{math}, we get
\begin{math}
x_2^\ell \in T \mathcal{D}_{A_{2,1}} \left( \Omega \right)
\end{math}
modulo
\begin{math}
\langle x_1, x_2 \rangle^{\ell + 1}
\end{math}. If
$f_2 \left( x \right) =$ $ \frac{1}{2 \left( \ell-j+1 \right)} x_1^{j-1} x_2^{\ell-j+1}$
(\begin{math}
1 \le j \le \ell
\end{math}) in the second generator in Eq.~\eqref{eq:gen}, then we get
\begin{math}
x_1^j x_2^{\ell-j} \in T \mathcal{D}_{A_{2,1}} \left( \Omega \right)
\end{math}
modulo
\begin{math}
\langle x_1, x_2 \rangle^{\ell + 1}
\end{math}. This proves the claim. 

This means that two volume form-germs
\begin{math}
\Omega_0
\end{math}
and 
\begin{math}
\Omega_1
\end{math}
are formally 
\begin{math}
L \mathcal{D}_{A_{2,1}}
\end{math}-equivalent if and only if 
\begin{math}
\Omega_0 \left( 0 \right) = \Omega_1 \left( 0 \right)
\end{math}
holds where 
\begin{math}
\Omega_0 \left( 0 \right)
\end{math}
is a value of 
\begin{math}
\Omega_0
\end{math}
at the origin. By Theorem~\ref{thm2.8}, if the two volume form-germs are 
\begin{math}
L \mathcal{D}_{A_{2,1}}
\end{math}-equivalent, they are
\begin{math}
\mathcal{D}_{A_{2,1}}
\end{math}-isotopic. Therefore, any 
\begin{math}
\Omega \in \mbox{Vol}
\end{math}
is formally 
\begin{math}
\mathcal{D}_{A_{2,1}}
\end{math}-isotopic to 
\begin{math}
\gamma^{-1} \; dx_1 \wedge dx_2
\end{math}
where 
\begin{math}
\gamma \neq 0
\end{math}
is a moduli parameter.

This means that formally
\begin{equation}
A_{2,1} \left( x \right) = \left(
\begin{matrix}
x_1 & 0 \\
0 & \left( \gamma x_1 + x_2^2 \right)^2 + x_2^5
\end{matrix}
\right),
\end{equation}
is a tri-modal map-germ relative to volume-preserving equivalence.

\section{Conclusion} \label{sec:conclusion}
We have introduced the volume-preserving equivalence among symmetric matrix-valued map-germs which is the unimodular version of Bruce's $\mathcal{G}$-equivalence. The key concept to deduce unimodular classification out of classification relative to
\begin{math}
\mathcal{G}
\end{math}-equivalence is symmetrical quasi-homogeneity. If a $\mathcal{G}$-equivalence class contains a symmetrically quasi-homogeneous representative, the class coincides with that relative to the volume-preserving equivalence (up to orientation reversing diffeomorphism in case if the ground field is real). By using that we have shown that all the simple classes relative to
\begin{math}
\mathcal{G}
\end{math}-equivalence in Bruce's list coincides with those relative to the volume preserving equivalence. Then, we have classified symmetric matrix-valued map-germs
\begin{math}
\left( \mathbb{R}^r, 0 \right) \rightarrow \left( \textnormal{Sym}_n, O_n \right)
\end{math}
of corank at most $1$ for the cases $r = 2, n = 2, 3$ and of
\begin{math}
\mathcal{G}_e
\end{math}-codimension less than $9$ and we have shown some of the normal forms split into two different unimodular singularities. We have provided several examples to illustrate that non simplicity does not imply non symmetrical quasi-homogeneity and the condition that a map-germ is symmetrically quasi-homogeneous is stronger than one that each component of the map-germ is quasi-homogeneous. We have also presented an example of non symmetrically quasi-homogeneous normal form relative to
\begin{math}
\mathcal{G}
\end{math}
and its corresponding formal unimodular normal form.



\begin{thebibliography}{9}
\bibitem{A}  V. I. Arnold, S. M. Gusein-Zade, A. N. Varchenko, \emph{
Singularities of Differentiable Maps}, Vol. 1, Birhauser, Boston, 1985.

\bibitem{Braam} P. Braam and H. Duistermaat, \emph{Normal forms of real symmetric systems with multiplicity}, Indag. Math., N. S., vol. 4, number 4, 1993, 407--421.

\bibitem{B} J. W. Bruce, \emph{ On families of symmetric matrices}, Moscow Mathematical Journal, vol. 3, number 2, April-June 2003, 335--360.

\bibitem{Bruce2002} J. W. Bruce, V. V. Goryunov and V. M. Zakalyukin, \emph{Sectional Singularities and Geometry of Families of Planar Quadratic Forms}, Trends in singularities, Trends Math., Birkh{\"a}user, Basel, 2002, 83--97.

\bibitem{Bruce1997}
J. W. Bruce and N. P. Kirk and A. A. du Plessis, \emph{Complete transversals and the classification of singularities}, Nonlinearity, vol. 10, number 1, 1997, 253--275.

\bibitem{CV_I} Y. Colin de Verdi\`ere, \emph{The level crossing problem in semi-classical analysis, I. The symmetric case}, Ann. Inst. Fourier, Grenoble, vol. 53, number 4, 2003, 1023--1054.

\bibitem{Damon}
J. Damon,
\emph{The unfolding and determinacy theorems for subgroups of $\mathcal{A}$ and
$\mathcal{K}$}.
 Memoirs of A.M.S. vol. 50,  No. 306, 1984.
 
\bibitem{D} W. Domitrz, \emph{Zero-dimensional symplectic isolated complete intersection singularities}, J. Singul. 6 (2012), 19-26.

\bibitem{DJZ1} W. Domitrz, S. Janeczko, M. Zhitomirskii, \emph{ Relative Poincare lemma, contractibility, quasi-homogeneity and vector fields tangent to a singular variety}, Illinois Journal of Mathematics, vol. 48, number 3, Fall 2004, 803--835.

\bibitem{DJZ2} W. Domitrz, S. Janeczko, M. Zhitomirskii, {\em Symplectic singularities of varietes: the method of algebraic restrictions}, Journal f\"ur die reine und angewandte Mathematik 618(2008), 197-235.

\bibitem{DR} W. Domitrz, J. H. Rieger, \emph{Volume preserving subgroups of
$\mathcal A$ and $\mathcal K$ and singularities in unimodular
geometry}, Mathematische Annalen
   345(2009), 783-817.



\bibitem{Honda} N. Honda, T. Kawai, Y. Takei, \emph{Virtual Turning Points}, SpringerBriefs in Mathematical Physics, 2015.














\bibitem{Lancaster2005} P. Lancaster and L. Rodman, \emph{Canonical Forms for Hermitian Matrix Pairs under Strict Equivalence and Congruence}, SIAM Review, vol. 47, number 3, September 2005, 407--443.

\bibitem{Lando} S. K. Lando,  \emph{Normal forms of the degrees of a volume form},  Funct Anal Its Appl 19 (1985), 146–148. 



\bibitem{Martinet}
J. Martinet,
\emph{Singularities of Smooth Functions and Maps},
 London Math. Soc. Lecture Note Series 58, Cambridge Univ. Press, 1982.

\bibitem{Mather1968}
J. Mather,
\emph{Stability of $C^\infty$-Mappings III. Finitely Determined Map Germs}.
 Publ. Math. I.H.E.S.
vol. 36, 1968, 127--156.


\bibitem{Mather1969}
J. Mather,  \emph{Stability of C$^\infty$ mappings, IV: Classification of stable germs by $\mathbb{R}$ algebras}, Publ. Math. I. H. E. S., vol. 37, 1969, 223--248.

\bibitem{Saito1971} K. Saito, \emph{Quasihomogene isolierte Singularit{\"a}ten von Hyperfl{\"a}chen}, Inventiones mathematicae 14, 1971, 123-142.

\bibitem{Varchenko}  A. N. Varchenko, \emph{Local classification of volume forms in the presence of a hypersurface}, Funct Anal Its Appl 19 (1985), 269–276.

\bibitem{Wall1981} C. T. C. Wall, \emph{Finite Determinacy of Smooth Map-Germs}, Bull. London Math. Soc., vol. 13, number 6, November 1981, 481--539.

















\end{thebibliography}
\end{document}